\newcommand{\ga}{\alpha}
\newcommand{\gd}{\delta}
\newcommand{\gw}{\omega}
\newcommand{\gS}{\Sigma}
\newcommand{\gs}{\sigma}
\newcommand{\eps}{\varepsilon}
\newcommand{\Q}{\mathbb{Q}}
\newcommand{\proj}{\mathrm{proj}}
\newcommand{\ssplit}{\mathtt{SPL}}
\newcommand{\har}{\upharpoonright}
\newcommand{\E}{\mathcal{E}}
\newcommand{\cantor}{2^\gw}
\newcommand{\baire}{\gw^\gw}
\newcommand{\bintree}{2^{<\gw}}
\newcommand{\gwtree}{\gw^{<\gw}}
\newcommand{\ana}{\mathbf{\Sigma}^1_1}
\newcommand{\coana}{\mathbf{\Pi}^1_1}
\newcommand{\dotxgen}{\dot g}
\newcommand{\Coll}{\mathrm{Coll}}
\newcommand{\dom}{\mathrm{dom}}
\newcommand{\rng}{\mathrm{rng}}
\newcommand{\power}{\mathcal{P}}
\newcommand{\pioneoneonsigmaoneone}{${\mathbf{\Pi}}^1_1$ on ${\mathbf{\gS}}^1_1$}
\newcommand{\trace}{\mathrm{cl}}
\newcommand{\rank}{\mathrm{rk}}
\newcommand{\nwd}{\mathtt{NWD}}
\newcommand{\basis}{\mathcal{O}}
\newcommand{\ord}{\mathrm{Ord}}
\newcommand{\cl}{\mathrm{cl}}
\newcommand{\spl}{\mathrm{split}}
\newcommand{\suc}{\mathrm{succ}}
\newcommand{\gdelta}{\mathbf{G}_\delta}
\newcommand{\fsigmadelta}{\mathbf{F}_{\sigma\delta}}
\newcommand{\btree}{\omega^{<\omega}}
\newcommand{\ro}{\mathrm{ro}}
\newcommand{\st}{\mathrm{st}}
\newtheorem{theorem}{Theorem}[section]
\newtheorem{lemma}[theorem]{Lemma}
\newtheorem*{corollary}{Corollary}
\newtheorem{proposition}[theorem]{Proposition}
\newtheorem*{claim}{Claim}
\theoremstyle{definition}
\newtheorem{definition}[theorem]{Definition}
\newtheorem{question}[theorem]{Question}
\newtheorem{conjecture}[theorem]{Conjecture}
\author{Marcin Sabok}\thanks{The first author was partially
  supported by the Mittag-Leffler Institute (Djursholm,
  Sweden) and by the ESF program ``New Frontiers of
  Infinity: Mathematical, Philosophical and Computational
  Prospects''.}
\author{Jind\v rich Zapletal} \thanks{The second author was
  partially supported by NSF grant DMS 0300201 and
  Institutional Research Plan No. AV0Z10190503 and grant
  IAA100190902 of GA AV \v{C}R. The visit of the second
  author at Wroc\l aw University was funded by a short visit
  grant of the INFTY project of ESF}
\address{Instytut Matematyczny Uniwersytetu Wroc\l awskiego,
  pl.  Grunwaldzki $2\slash 4$, $50$-$384$ Wroc\l aw, Poland
  and Institut Mittag-Leffler, Aurav\"agen 17 SE-182 60
  Djursholm Sweden}
\email{sabok@math.uni.wroc.pl}
\address{Institute of Mathematics of the Academy of Sciences
  of the Czech Republic, \v{Z}itn\'a 25, CZ - 115 67 Praha
  1, Czech Republic and Department of Mathematics University
  of Florida, 358 Little Hall PO Box 118105 Gainesville, FL
  32611-8105, USA}
\email{zapletal@math.cas.cz}
\title{Forcing properties of ideals of closed sets}
\begin{document}

\subjclass[2000]{03E40, 03E15, 54H05, 26A21} 

\keywords{forcing, ideals, Kat\v etov order}

\maketitle

\begin{abstract}

  With every $\sigma$-ideal $I$ on a Polish space we
  associate the $\sigma$-ideal $I^*$ generated by the closed
  sets in $I$. We study the forcing notions of Borel sets
  modulo the respective $\sigma$-ideals $I$ and $I^*$ and
  find connections between their forcing properties. To this
  end, we associate to a $\sigma$-ideal on a Polish space an
  ideal on a countable set and show how forcing properties
  of the forcing depend on combinatorial properties of the
  ideal.  For $\sigma$-ideals generated by closed sets we
  also study the degrees of reals added in the forcing
  extensions.  Among corollaries of our results, we get
  necessary and sufficient conditions for a $\sigma$-ideal
  $I$ generated by closed sets, under which every Borel
  function can be restricted to an $I$-positive Borel set on
  which it is either 1-1 or constant.
\end{abstract}

\section{Introduction}

This paper is concerned with the study of $\sigma$-ideals
$I$ on Polish spaces and associated forcing notions $P_I$ of
$I$-positive Borel sets, ordered by inclusion. If $I$ is a
$\sigma$-ideal on $X$, then by $I^*$ we denote the
$\sigma$-ideal generated by the closed subsets of $X$ which
belong to $I$. Clearly, $I^*\subseteq I$ and $I^*=I$ if $I$
is generated by closed sets.

There are natural examples when the forcing $P_I$ is well
understood, whereas little is known about $P_{I^*}$. For
instance, if $I$ is the $\sigma$-ideal of Lebesgue null
sets, then the forcing $P_I$ is the random forcing and $I^*$
is the $\sigma$-ideal $\E$. The latter has been studied by
Bartoszy\'nski and Shelah \cite{bartoszynski:closed},
\cite{bartoszynski:set} but from a slightly different point
of view. On the other hand, most classical forcing notions,
like Cohen, Sacks or Miller forcings fall under the category
of $P_I$ for $I$ generated by closed sets.

Some general observations are right on the surface. By the
results of \cite[Section 4.1]{z:book2} we have that the
forcing $P_{I^*}$ is proper and \textit{preserves Baire
  category} (for a definition see \cite[Section
3.5]{z:book2}). In the case when $I\neq I^*$ on Borel sets,
the forcing $P_{I^*}$ is not $\baire$-bounding by
\cite[Theorem 3.3.1]{z:book2}, since any condition $B\in
P_{I^*}$ with $B\in I$ has no closed $I^*$-positive subset.
It is worth noting here that the forcing $P_{I^*}$ depends
not only on the $\sigma$-ideal $I$ but also on the topology
of the space $X$.

One of the motivations behind studying the idealized forcing
notions $P_I$ is the correspodence between Borel functions
and reals added in generic extensions. The well-known
property of the Sacks or Miller forcing is that all reals in
the extension are either ground model reals, or have the
same degree as the generic real. Similar arguments also show
that the generic extensions are minimal, in the sense that
there are no intermediate models. On the other hand, the
Cohen forcing adds continuum many degrees and the structure
of the generic extension is very far from minimality. In
\cite[Theorem 4.1.7]{z:book2} the second author showed that
under some large cardinal assumptions the Cohen extension is
the only intermediate model which can appear in the $P_I$
generic extension when $I$ is universally Baire
$\sigma$-ideal generated by closed sets.

The commonly used notion of degree of reals in the generic
extensions is quite vague, however, and in this paper we
distinguish two instances.

\begin{definition}
  Let $V\subseteq W$ be a generic extension. We say that two
  reals $x,y\in W$ are of the same \textit{continuous
    degree} if there is a partial homeomorphism from
  $\baire$ to $\baire$ such that $f\in V$, $\dom(f)$ and
  $\rng(f)$ are $\gdelta$ subsets of the reals and $f(x)=y$.
  We say that $x,y\in W$ are of the same \textit{Borel
    degree} if there is a Borel automorphism $h$ of $\baire$
  such that $h\in V$ and $h(x)=y$.
\end{definition}

Following the common fashion, we say that a forcing notion
$P_I$ \textit{adds one continuous} (or \textit{Borel})
\textit{degree} if for any $P_I$ generic extension
$V\subseteq W$ any real in $W$ either belongs to $V$, or has
the same continuous (or Borel) degree as the generic real.

The following results connect the forcing properties of
$P_I$ and $P_{I^*}$. In some cases we need to make some
definability assumption, namely that $I$ is \textit{$\coana$
  on $\ana$}. For a definition of this notion see
\cite[Section 29.E]{kechris:classical} or \cite[Section
3.8]{z:book2}. Note that if $I$ is $\coana$ on $\ana$, then
$I^*$ is $\coana$ on $\ana$ too, by \cite[Theorem
35.38]{kechris:classical}.

\begin{theorem}\label{onedegree}
  If the forcing $P_I$ is proper and $\baire$-bounding, then
  the forcing $P_{I^*}$ adds one continuous degree.
\end{theorem}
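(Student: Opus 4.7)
The plan is as follows. Fix a $P_{I^*}$-name $\dot y$ for a real in the extension, a condition $B \in P_{I^*}$, and a Borel function $\tau : X \to \baire$ in the ground model such that $B \Vdash \dot y = \tau(\dot x_{\mathit{gen}})$. Working below $B$, assume that $B$ additionally forces $\dot y \notin V$; the goal is then to produce a partial homeomorphism $f \in V$ between $\gdelta$ subsets of $\baire$ with $f(x_{\mathit{gen}}) = y$. The key preliminary observation is that for compact $K \subseteq X$ one has $K \in I$ if and only if $K \in I^*$, since $K$ itself is closed; consequently any compact $I$-positive set is automatically $I^*$-positive, and conversely.

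The first stage is to locate a compact $I^*$-positive $K \subseteq B$ on which $\tau \har K$ is continuous. When $B$ happens to be $I$-positive this follows at once from the \crn\ property of $P_I$, which is a standard consequence of properness together with $\baire$-bounding (\cite[Theorem 3.3.1]{z:book2}); the resulting compact $I$-positive subset is then $I^*$-positive by the preliminary observation. The genuinely delicate case is $B \in I \setminus I^*$: there I would refine the Polish topology on $X$ to one making $\tau$ continuous, track how closed sets in $I$ behave under the refinement, and use the hypothesis on $P_I$ to again isolate a compact $I^*$-positive piece.

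Next, working inside such a $K$, consider the closed equivalence relation $E = \ker(\tau \har K)$. The assumption $B \Vdash \dot y \notin V$ forces every $E$-class to be $I^*$-small, since an $I^*$-positive class would force $\tau(\dot x_{\mathit{gen}})$ to equal a specific ground model point. If all $E$-classes are countable, the Luzin--Novikov theorem decomposes $K$ into countably many Borel pieces on each of which $\tau$ is injective; at least one piece is $I^*$-positive, and one further application of the \crn\ property, now to the identity function, yields a compact $I^*$-positive $K' \subseteq K$ on which $\tau$ is 1--1. If some $E$-class is uncountable, it is an $I^*$-small closed subset of $K$ whose deletion leaves an $I^*$-positive remainder; iterating this removal and combining with a selection argument for closed equivalence relations with $I$-small classes yields the desired $K'$.

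Finally, with $K'$ compact, $I^*$-positive, and $\tau \har K'$ a continuous injection, the map $\tau \har K' : K' \to \tau(K')$ is a homeomorphism of compact Hausdorff spaces. By Lavrentiev's theorem it extends to a homeomorphism $f : G_1 \to G_2$ between $\gdelta$ subsets of $\baire$ with $K' \subseteq G_1$ and $\tau(K') \subseteq G_2$, and $f$ lies in $V$. Since $K'$ belongs to the generic filter, $x_{\mathit{gen}} \in K'$ and $f(x_{\mathit{gen}}) = \tau(x_{\mathit{gen}}) = y$, exhibiting $y$ as being of the same continuous degree as $x_{\mathit{gen}}$. The main obstacles I anticipate are handling $B \in I \setminus I^*$ in the first stage and the uncountable-fiber case in the third stage; both hinge on exploiting the asymmetry between $I$ and $I^*$ in tandem with the strong hypothesis that $P_I$ is proper and $\baire$-bounding.
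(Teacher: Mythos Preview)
Your plan has a genuine gap at the very first stage, and it is precisely the case you flag as delicate. When $B\in I\setminus I^*$, there is \emph{no} closed (let alone compact) $I^*$-positive subset of $B$: any closed $C\subseteq B$ lies in $I$ because $I$ is an ideal, and a closed set in $I$ is by definition in $I^*$. So the compact $K$ you seek simply does not exist. Refining the topology does not help: a set compact in a finer Polish topology is still compact in the original one, hence closed in the original one, hence in the original $I^*$ by the same argument. In fact the paper notes in the introduction that exactly this obstruction makes $P_{I^*}$ fail to be $\baire$-bounding whenever $I\neq I^*$ on Borel sets, so continuous reading of names for $P_{I^*}$ is unavailable in the only interesting case.

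The paper's proof proceeds along an entirely different route. It associates to $I$ the ideal $J_I$ on the countable basis $\basis$ (Section~\ref{sec:closure}) and shows two things: first, properness plus $\baire$-bounding of $P_I$ forces $J_I$ to have the \emph{discrete set property} (every function from a $J_I$-positive set into a Polish space has a $J_I$-positive subset with discrete image); second, below any condition witnessing $I\neq I^*$, the forcing $P_{I^*}$ is equivalent to the Miller-type tree forcing $Q(J_I)$ (Proposition~\ref{representation}). One then argues directly in $Q(J_I)$: given a continuous $f:[T]\to\baire$ that is nowhere constant, the discrete set property lets one thin each splitting level so that the images of distinct immediate successors lie in pairwise disjoint open sets, and a fusion produces $S\leq T$ on which $f$ is a topological embedding. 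The upshot is that the compact-set-plus-injectivity picture you are aiming for is replaced by a tree on which the function is an embedding; the compactness you want is irretrievable, but the tree structure of $Q(J_I)$ is an adequate substitute.
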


\begin{theorem}\label{nocohentheorem}
  If the forcing $P_I$ is proper and does not add Cohen
  reals, then the forcing $P_{I^*}$ does not add Cohen
  reals.
\end{theorem}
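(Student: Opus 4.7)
The plan is to argue contrapositively: assuming $P_{I^*}$ adds a Cohen real, I will construct a $P_I$-name for a Cohen real. Fix a condition $B\in P_{I^*}$ and a $P_{I^*}$-name $\dot c$ for an element of $\cantor$ with $B\Vdash_{P_{I^*}}$ ``$\dot c$ is Cohen-generic over $V$''. The first step is to invoke the Borel reading of names for $P_{I^*}$, which is available because $P_{I^*}$ is proper (by the results from \cite{z:book2} quoted in the introduction) and $I^*$ is $\sigma$-generated by closed sets. This produces an $I^*$-positive Borel set $B'\subseteq B$ and a Borel function $f\colon B'\to\cantor$ such that $B'\Vdash_{P_{I^*}}\dot c=f(\dot x)$, where $\dot x$ denotes the name for the generic point.

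The second step reinterprets Cohen-genericity combinatorially. For every open dense $G\subseteq\cantor$ coded in $V$ we have $B'\Vdash_{P_{I^*}} f(\dot x)\in G$, and the standard forcing calculus for these Boolean-algebra forcings translates this into
\[
B'\setminus f^{-1}[G]\in I^*
\]
for every such $G$.

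The third step transfers the situation to $P_I$. Since $I^*\subseteq I$, the set $B'$ is $I$-positive and therefore a condition in $P_I$; the same ground-model Borel function $f$ yields a $P_I$-name $f(\dot x)$ for an element of $\cantor$. For every $V$-coded open dense $G\subseteq\cantor$, $B'\setminus f^{-1}[G]\in I^*\subseteq I$, which is exactly the statement $B'\Vdash_{P_I} f(\dot x)\in G$. Thus $B'$ forces in $P_I$ that $f(\dot x)$ is Cohen over $V$, contradicting the hypothesis. The main obstacle is the first step: justifying the Borel reading of $\dot c$ on an $I^*$-positive Borel subcondition, which relies on properness of $P_{I^*}$ together with the closed-generated structure of $I^*$. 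Once that is in place, the rest of the argument is simply a translation through the inclusion $I^*\subseteq I$.
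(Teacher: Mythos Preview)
Your argument contains a fatal gap in the third step. You write ``Since $I^*\subseteq I$, the set $B'$ is $I$-positive.'' But the inclusion $I^*\subseteq I$ gives you the implication in the wrong direction: from $B'\notin I^*$ you cannot conclude $B'\notin I$. In fact the interesting case of the theorem is precisely when there exist Borel sets in $I\setminus I^*$; the paper explicitly notes that any such set is a condition in $P_{I^*}$ yet has no closed $I^*$-positive subset. If $B'$ happens to lie in $I$ (which you cannot rule out), it is not a condition in $P_I$ at all, and your transfer argument collapses. Nor can this be repaired by passing to the closure: while $\overline{B'}\notin I$ does follow, the function $f$ is only defined on $B'$, and $\overline{B'}\setminus B'$ may well carry all the $I$-mass.

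This is exactly why the paper's route is so much more elaborate. Instead of a direct transfer through the inclusion $I^*\subseteq I$, the authors associate to $I$ an ideal $J_I$ on the countable basis $\basis$, introduce a Miller-like tree forcing $Q(J_I)$, and prove two separate facts: (i) below any condition in $I\setminus I^*$ the forcing $P_{I^*}$ is equivalent to $Q(J_I)$ (Proposition~\ref{representation}); and (ii) if $P_I$ is proper and adds no Cohen real, then $Q(J_I)$ adds no Cohen real. Step (ii) is itself nontrivial: it goes through a lemma producing, for any $J_I$-positive set $a$ and any map $f\colon a\to\cantor$, a $J_I$-positive $b\subseteq a$ and a closed nowhere dense $N$ with $\trace(\{O\in b: f(O)\in B_\eps(N)\})=\trace(b)$ for all $\eps>0$, followed by a fusion argument that builds both a subtree and an open dense set avoided by the putative Cohen real. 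The point is that the connection between $P_I$ and $P_{I^*}$ passes through the closure operator and the combinatorics of $J_I$, not through a naive inclusion of ideals.
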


\begin{theorem}\label{noindependenttheorem}
  If $I$ is $\coana$ on $\ana$ and the forcing $P_I$ is
  proper and does not add independent reals, then the
  forcing $P_{I^*}$ does not add independent reals.
\end{theorem}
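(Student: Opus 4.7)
The plan is to represent a putative $P_{I^*}$-name for an independent real by a Borel function on a condition, and then split on whether that condition contains an $I$-positive Borel subset. The $I$-positive subcase will reduce cleanly to the hypothesis on $P_I$, while the $I$-null subcase (where $B \in I \setminus I^*$) will have to be treated via the combinatorial ideal on a countable set that has been associated to $P_{I^*}$ earlier in the paper; the latter is what I expect to be the main obstacle.

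First I would fix $B \in P_{I^*}$ and a $P_{I^*}$-name $\dot a$ for an infinite subset of $\omega$. Since $I$, and therefore $I^*$, is $\coana$ on $\ana$, the continuous-reading-of-names machinery of \cite[Section 3.8]{z:book2} lets me shrink $B$ and assume that $\dot a$ is the value of a Borel function $f\colon B \to \powerset(\omega)$ on the generic real. It then suffices to produce $B' \leq B$ in $P_{I^*}$ and $b \in [\omega]^\omega \cap V$ so that the Borel set $\{ x \in B' : b \text{ splits } f(x)\}$ lies in $I^*$; a further refinement according to which side of the split is finite will then give the forced (non)inclusion of $b$ in $\dot a$.

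In the first subcase, where some Borel $C \subseteq B$ is $I$-positive, the restriction $f \har C$ induces a $P_I$-name for an infinite subset of $\omega$. The hypothesis that $P_I$ adds no independent real, together with properness of $P_I$, yields an $I$-positive Borel $C' \subseteq C$ and $b \in [\omega]^\omega \cap V$ such that the exceptional set $E = \{ x \in C' : b \text{ splits } f(x)\}$ belongs to $I$. Since $C' \notin I$ and $E \in I$, the set $B' = C' \setminus E$ remains $I$-positive, and hence $I^*$-positive because $I^* \subseteq I$; by construction $b$ does not split $f(x)$ for any $x \in B'$, so this $B'$ and $b$ are as required.

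The hard subcase is when every Borel subset of $B$ lies in $I$, so that $B \in I \setminus I^*$ and the $P_I$ hypothesis cannot be invoked on any subcondition. The plan here is to use the translation, developed earlier in the paper, of forcing properties of $P_{I^*}$ into Kat\v etov-style combinatorial properties of an ideal $\mathcal{J}$ on a countable set canonically attached to $B$ (for example, the set of basic open $U$ with $U \cap B \notin I^*$). In this language, ``$P_{I^*}$ below $B$ adds an independent real'' becomes a statement placing $\mathcal{J}$ above a canonical ``independent real'' ideal in the Kat\v etov order, and the hypothesis on $P_I$ gives the analogous negation for the ideals associated with $I$-positive conditions. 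The technical core is to check, using the $\coana$-on-$\ana$ definability to uniformize choices of witnesses across Borel sets, that this Kat\v etov-non-dominance is inherited when one passes from $I$ to $I^*$; a fusion along a $\mathcal{J}$-positive subtree then produces the desired $B'$ and $b$ in this case as well, paralleling the corresponding step in the proof of Theorem~\ref{nocohentheorem}.
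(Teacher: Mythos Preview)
Your overall strategy matches the paper's: split according to whether the condition has an $I$-positive Borel subset, handle that case directly via the $P_I$ hypothesis, and in the $B\in I\setminus I^*$ case pass (via Proposition~\ref{representation}) to the tree forcing $Q(J_I)$ and invoke the Kat\v etov criterion of Theorem~\ref{splitting}, whose critical hereditary family for independent reals is $\ssplit$. Your Case~1 argument is correct as written.

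One correction concerns where the $\coana$-on-$\ana$ hypothesis actually enters. It is not needed for continuous reading of names (properness of $P_{I^*}$ already suffices for that), nor for ``uniformizing witnesses'' in any loose sense. Its precise role is to make the ideal $J_I$ coanalytic, which is exactly the standing hypothesis of Theorem~\ref{splitting}: the hard direction of that theorem is proved via an auxiliary game $G'(a)$ whose Borel determinacy requires this complexity bound on $J_I$. Your parallel with Theorem~\ref{nocohentheorem} is also slightly off: for Cohen reals the paper gives a direct fusion argument on $Q(J_I)$ (the proposition in Section~\ref{sec:closure}) that needs no definability assumption at all, whereas the independent-real case must go through the dichotomy of Theorem~\ref{splitting} and hence genuinely uses that $J_I$ is coanalytic. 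So the ``technical core'' you flag is really two separate pieces: first the easy observation that if $J_I\restriction a\geq_K\ssplit$ then one can cook up an independent real already in the $P_I$-extension over $\trace(a)$ (this is the analogue of the Section~\ref{sec:closure} propositions, left implicit in the paper), and second the substantial Theorem~\ref{splitting} converting $J_I\restriction a\not\geq_K\ssplit$ into ``$Q(J_I)$ adds no independent real''.
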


\begin{theorem}\label{measuretheorem}
  If $I$ is $\coana$ on $\ana$ and the forcing $P_I$ is
  proper and preserves outer Lebesgue measure, then the
  forcing $P_{I^*}$ preserves outer Lebesgue measure.
\end{theorem}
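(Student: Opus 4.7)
The plan is to verify the standard combinatorial reformulation of outer Lebesgue measure preservation: for every $B\in P_{I^*}$ and every $P_{I^*}$-name $\dot N$ for a Borel null subset of $\cantor$, there is $B'\le B$ in $P_{I^*}$ and a ground-model null Borel set $M\subseteq \cantor$ such that $B'\Vdash\dot N\subseteq M$. This is the statement I would aim to prove, following the template used in Theorems~\ref{nocohentheorem} and~\ref{noindependenttheorem}.

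The first step is to replace the name $\dot N$ by a Borel object. Since $I$ is $\coana$ on $\ana$, the same holds for $I^*$, and the continuous reading of names techniques from \cite[Section~4.1]{z:book2} apply to $P_{I^*}$. Strengthening $B$ if necessary I would obtain a Borel $I^*$-positive $B_0\subseteq B$ and a Borel function $g:B_0\to Y$, where $Y$ is the Polish space of codes for Borel null subsets of $\cantor$, satisfying $B_0\Vdash \dot N= g(\dotxgen)$. The problem then reduces to the following combinatorial statement on $I^*$: for every Borel $I^*$-positive $B_0$ and every Borel $g:B_0\to Y$, there exist an $I^*$-positive Borel $B'\subseteq B_0$ and a null Borel $M$ with $g(x)\subseteq M$ for every $x\in B'$. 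The hypothesis that $P_I$ is proper and preserves outer Lebesgue measure is exactly the analogous combinatorial statement for $I$-positive Borel sets.

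The next step is a dichotomy on $B_0$. If $B_0\notin I$, then $B_0$ is a condition of $P_I$ and the Borel function $g$ is a legitimate datum for the $P_I$ preservation hypothesis; feeding it in produces $B'\subseteq B_0$ with $B'\notin I$ (hence $B'\notin I^*$) and a null Borel $M$ as required. The delicate case is $B_0\in I\setminus I^*$: $B_0$ is $I^*$-positive but every closed subset of $B_0$ lies in $I$, so the $P_I$ hypothesis cannot be invoked directly on $B_0$. Here I would appeal to the main technical device of the paper, the ideal on a countable set associated to $I^*$, which encodes the combinatorics of avoiding closed $I$-sets. The hypothesis on $P_I$ translates into a Katětov-type combinatorial property of the ideal associated with $I$, and one has to show that this property descends to the ideal associated with $I^*$ and thereby forces the desired $B'$ and $M$ in the $I\setminus I^*$ regime as well.

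The main obstacle is precisely this second case. The $\coana$ on $\ana$ assumption is used twice: once to obtain the continuous reading of names in step two, and crucially again to make the transfer via the associated countable ideal Borel-measurable, so that Borel selections of closed $I$-sets inside $B_0$ can be assembled into a single ground-model null cover $M$. Once the combinatorial transfer is set up, the rest is a Fubini-style amalgamation of the null covers produced by the $P_I$ hypothesis on the pieces, and the verification that $B'$ remains $I^*$-positive follows from the non-coverability of $B_0$ by countably many closed $I$-sets.
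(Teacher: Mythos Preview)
Your dichotomy and the treatment of the case $B_0\notin I$ are fine, but the proposal stops precisely where the actual work begins: the case $B_0\in I\setminus I^*$ is not argued, only described. Saying that ``the hypothesis on $P_I$ translates into a Kat\v etov-type combinatorial property'' and that ``the rest is a Fubini-style amalgamation'' is a restatement of the goal, not a proof. In the paper the argument is carried out in three concrete steps, none of which your sketch supplies: (i) from preservation of outer Lebesgue measure by $P_I$ one deduces that the ideal $J_I$ on the countable basis $\basis$ has the \emph{Fubini property} (Section~\ref{sec:closure}); (ii) one proves a representation theorem (Proposition~\ref{representation}) showing that below any condition in $I\setminus I^*$ the forcing $P_{I^*}$ is equivalent to the tree forcing $Q(J_I)$; and (iii) one runs a fusion in $Q(J_I)$ (Theorem~\ref{fubini}) using the Fubini property to produce the single ground-model null cover. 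The tree forcing $Q(J_I)$ is the substrate on which the ``amalgamation'' is performed; without it, there is no mechanism to keep $B'$ $I^*$-positive while absorbing countably many null covers.

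Two smaller points. First, there are not two ideals in play: since $\trace(a)$ is always closed and $I$ and $I^*$ contain the same closed sets, $J_I=J_{I^*}$, so no ``descent'' is needed. Second, your account of where the $\coana$ on $\ana$ hypothesis enters is off. Continuous reading of names already holds for $P_{I^*}$ simply because $I^*$ is generated by closed sets. The definability assumption is needed so that $J_I$ is coanalytic and hence universally measurable; this is exactly what Theorem~\ref{fubini} requires in order to compute the measures of the sets $\int_a D\,dJ_I$ inside the fusion.
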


\noindent The methods of this paper can be extended without
much effort to other cases, for example to show that if
$P_I$ is proper and has the weak Laver property, then
$P_{I^*}$ inherits this property. As a consequence, by the
results of \cite[Theorem 1.4]{z:ppoints} it follows (under
some large cardinal assumptions) that if $P_I$ proper and
preserves P-points, then $P_{I^*}$ preserves P-points as
well.

\medskip

To prove the above results we introduce a combinatorial tree
forcing notion $Q(J)$ for $J$ which is a hereditary family
of subsets of $\gw$. These are relatives of the Miller
forcing. To determine forcing properties of $Q(J)$ we study
the position of $J$ in the Kat\v etov ordering, a
generalization of the Rudin--Keisler order on ultrafilters.
Further, we show that the forcing $P_I$ gives rise to a
natural ideal $J_I$ on a countable set and we correlate
forcing properties of $Q(J_I)$ with the Kat\v etov
properies of $J_I$.  Finally, we prove that the forcing
$P_{I^*}$ is, in the nontrivial case, equivalent to
$Q(J_I)$. The conjunction of these results proves all the
above theorems.

\medskip

It is not difficult to see that the $\sigma$-ideal of meager
sets has the following maximality property: if $I$ is such
that $I^*$ is the $\sigma$-ideal of meager sets, then
$I=I^*$ on Borel sets. 

In fact, even if $P_{I^*}$ is equivalent to the Cohen
forcing, then $I=I^*$ on Borel sets.  Indeed, if the
$P_{I^*}$ generic real is a Cohen real, then $I^*$ contains
all meager sets. If $U$ is is the union of all basic open
sets in $I$, then $U\in I\cap I^*$ and if $F$ is the
complement of $U$, then on the family of Borel subsets of
$F$ the $\sigma$-ideals $I$ and $I^*$ are equal to the
$\sigma$-ideal of meager subsets of $F$.

We will show that the same holds for the $\sigma$-ideals for
the Sacks and Miller forcings.

\begin{proposition}\label{millertheorem}
  If $I$ is a $\sigma$-ideal such that $I\not=I^*$ on Borel
  sets, then $P_{I^*}$ is neither equivalent to the Miller
  nor to the Sacks forcing.
\end{proposition}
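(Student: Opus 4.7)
Both parts rest on the same preliminary observation: any Borel $B_0\in I\setminus I^*$ is an $I^*$-positive condition of $P_{I^*}$ that admits no closed $I^*$-positive subset, because by heredity of the $\sigma$-ideal every closed $F\subseteq B_0$ lies in $I$ and hence in $I^*$. Fix such a $B_0$ for the remainder of the argument.

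For the Sacks case the argument is immediate. Sacks forcing is $\baire$-bounding, whereas the introduction already points out, invoking \cite[Theorem 3.3.1]{z:book2}, that $P_{I^*}$ is not $\baire$-bounding under the standing hypothesis $I\ne I^*$; indeed the condition $B_0$ witnesses the failure. Since $\baire$-boundedness is a forcing invariant, $P_{I^*}\not\cong$ Sacks.

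For the Miller case $\baire$-boundedness is useless, since Miller itself is not bounding. The plan is to exploit the central equivalence $P_{I^*}\cong Q(J_I)$ announced in the introduction, which applies precisely in the present nontrivial regime $I\ne I^*$. Miller forcing is itself an instance of the $Q(J)$-construction for a distinguished hereditary family $J$ occupying a specific (minimal) position in the Kat\v etov order. If $P_{I^*}$ were equivalent to Miller, one would therefore obtain $Q(J_I)\cong Q(J)$, and the Kat\v etov-theoretic analysis of the $Q$-forcings developed elsewhere in the paper would force $J_I$ to sit in this same Kat\v etov class. But the construction of $J_I$ in the nontrivial case translates the presence of a Borel set like $B_0$ having no closed $I^*$-positive subset into an element of $J_I$ whose Kat\v etov behavior precludes any such reduction, yielding the required contradiction.

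The principal obstacle is the Miller half. Miller shares with $P_{I^*}$ the failure of $\baire$-boundedness and the preservation of Baire category, so no soft forcing invariant separates the two forcings; the discrimination has to be pushed down to the Kat\v etov level of the associated combinatorial ideal $J_I$, and the nontrivial hypothesis $I\ne I^*$ enters exactly by preventing $J_I$ from degenerating to the Miller-type ideal.
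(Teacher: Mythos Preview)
Your Sacks argument is correct and indeed simpler than what the paper does: since $P_{I^*}$ fails $\baire$-bounding below $B_0$ while Sacks is bounding, the two cannot be forcing-equivalent. The paper does not use this shortcut; it treats Sacks and Miller uniformly via Lemma~\ref{millerlemma}.

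Your Miller argument, however, is not a proof. Two concrete problems:

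\begin{itemize}
\item The equivalence $P_{I^*}\cong Q(J_I)$ you invoke is Proposition~\ref{representation}, and it carries hypotheses---$P_I$ proper and nowhere equivalent to Cohen---that are \emph{not} assumed in Proposition~\ref{millertheorem}. You cannot appeal to it here.
\item Even granting that equivalence, the remainder of your sketch has no content. The paper's Kat\v etov results run in one direction only: Kat\v etov properties of $J$ imply forcing properties of $Q(J)$. Nothing in the paper says that $Q(J)\cong Q(J')$ forces $J$ and $J'$ into the same Kat\v etov class, and you supply no argument for this. The sentence ``the construction of $J_I$ \dots translates the presence of a Borel set like $B_0$ \dots into an element of $J_I$ whose Kat\v etov behavior precludes any such reduction'' is a placeholder, not a proof step.
\end{itemize}

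The paper's actual argument is short and uniform. The key is Lemma~\ref{millerlemma}: if Miller (or Sacks) forces a name $\dot x$ into a Borel set $B$, then below some condition $\dot x$ is forced into a \emph{closed} set $C$ with $C\setminus B$ countable. Applied with $B=B_0\in I$, the closed set $C$ satisfies $C\subseteq B_0\cup(C\setminus B_0)\in I$, so $C\in I^*$. Hence if $P_{I^*}$ were equivalent to Miller or Sacks, the $P_{I^*}$-generic---which below $B_0$ is forced into $B_0$---could be forced into a closed set in $I^*$, contradicting the fact that the generic avoids every ground-model $I^*$-small set. The whole weight of the Miller case is carried by this closed-set lemma, and your proposal contains nothing that plays its role.
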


Next, motivated by the examples of the Sacks and the Miller
forcing we prove the following.

\begin{theorem}\label{degrees}
  Let $I$ be a $\sigma$-ideal generated by closed sets on a
  Polish space $X$. Any real in a $P_I$-generic extension is
  either a ground model real, a Cohen real, or else has the
  same Borel degree as the generic real.
\end{theorem}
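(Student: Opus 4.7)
The plan is to reduce to analyzing a continuous ground-model function and then run a trichotomy on its behavior. Given any $P_I$-name $\dot y$ for a real in $\baire$, the continuous reading of names for $P_I$ when $I$ is generated by closed sets (see \cite[Section 4.1]{z:book2}) furnishes, after strengthening the condition, a closed $I$-positive set $C \subseteq X$ and a continuous function $f\colon C\to\baire$ in $V$ such that $C\Vdash\dot y=f(\dot x_{\mathrm{gen}})$, where $\dot x_{\mathrm{gen}}$ names the generic real. The proof then splits into three cases according to the behavior of $f$, each yielding one of the three alternatives in the statement.

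If $f$ has a constant restriction on some $I$-positive Borel subset of $C$, then $\dot y$ is decided to be a ground-model real below that condition. If $f$ has an injective restriction on some $I$-positive Borel subset $B\subseteq C$, then $f[B]$ is Borel and $(f\har B)^{-1}$ is Borel by the Lusin--Souslin theorem; the Kuratowski Borel isomorphism theorem then allows one to extend $f\har B$ to a Borel automorphism $h$ of $\baire$ in $V$, by filling in any Borel bijection between the Borel complements of $B$ and $f[B]$, which have equal Borel cardinality. One then has $h(\dot x_{\mathrm{gen}})=\dot y$ below $B$, establishing the same Borel degree. In the remaining case, no constant or injective restriction exists on any $I$-positive Borel subset of $C$, and I must show that $\dot y$ is forced to be Cohen over $V$.

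For the Cohen case, it suffices to prove that $f^{-1}[F]\in I$ for every closed nowhere dense $F\subseteq\baire$ coded in $V$: a ground-model meager Borel set is a countable union of such $F$, so its $f$-preimage lies in the $\sigma$-ideal $I$, whence $\dot y\notin M$ for every ground-model meager $M$. Suppose for contradiction that some closed nowhere dense $F$ has $f^{-1}[F]\notin I$; since $I$ is generated by closed sets, strengthen to a closed $I$-positive $B\subseteq f^{-1}[F]$. I then build a Cantor scheme $(B_s,F_s)_{s\in 2^{<\gw}}$ of closed $I$-positive $B_s\subseteq B$ and closed $F_s\subseteq F$ with $f[B_s]\subseteq F_s$, where $B_{s\smallfrown 0},B_{s\smallfrown 1}$ are disjoint closed $I$-positive subsets of $B_s$ and $F_{s\smallfrown 0},F_{s\smallfrown 1}$ are disjoint in $F_s$, all diameters tending to zero. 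The splitting step uses the failure of the constant alternative: every fiber $f^{-1}(\{z\})$ is in $I$, so $f[B_s]$ is necessarily uncountable, and the auxiliary set $A_s=\{z\in f[B_s]:f^{-1}[U]\cap B_s\notin I\text{ for every open }U\ni z\}$ is nonempty and contains at least two points (by a Lindel\"of argument: the complement $f[B_s]\setminus A_s$ is covered by countably many open sets with $I$-small preimage in $B_s$, and a singleton $A_s$ would confine $f^{-1}[A_s]\cap B_s$ to one fiber, hence into $I$). Two distinct points of $A_s$ with disjoint closed neighborhoods provide the next level of the scheme.

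The main obstacle is to verify that the fusion limit $B^*=\bigcap_n\bigcup_{|s|=n}B_s$ is $I$-positive; once this is known, $f\har B^*$ is injective because distinct branches are forced into disjoint $F_s$, contradicting the failure of the injective alternative and completing the proof. The $I$-positivity of $B^*$ should come from the standard fusion machinery for $P_I$ when $I$ is generated by closed sets (cf.\ \cite[Chapter 4]{z:book2}): arrange the construction so as to meet a pre-enumerated sequence of dense open subsets of $P_I$ at every level of the scheme, making $B^*$ a master condition for a countable elementary submodel and therefore $I$-positive.
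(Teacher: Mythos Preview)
Your trichotomy is natural, and the first two branches (a constant restriction gives a ground-model real; an injective restriction plus Lusin--Souslin and Kuratowski gives the same Borel degree) are fine. The gap is in the third branch, exactly at the step you yourself flag as ``the main obstacle'': the $I$-positivity of the binary Cantor-scheme limit $B^*$. The appeal to ``standard fusion machinery'' does not go through for arbitrary $\sigma$-ideals generated by closed sets. With vanishing diameters and binary splitting, $B^*$ is always homeomorphic to $\cantor$, hence compact; so for any $I$ containing all compact sets (for instance the $\sigma$-ideal generated by compact subsets of $\baire$, i.e.\ the Miller ideal) one has $B^*\in I$ automatically, regardless of which dense sets are met along the way. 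Meeting dense sets from a countable model $M$ only ensures that every point of $B^*$ is $P_I$-generic over $M$; it does not make $B^*$ itself a condition, and ``master condition therefore $I$-positive'' reverses the logical order --- a master condition is by definition already a condition. There is no general binary fusion for $P_I$ when $I$ is merely generated by closed sets.

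The paper takes a different route that avoids exactly this obstruction. It does not argue by cases; rather, starting from a condition forcing $\dot x$ to be neither Cohen nor ground-model, it first performs a topology change (Proposition~\ref{top}) making $f$ continuous \emph{and open} on an $I$-perfect Polish domain. Openness of $f$ is then used at every node to produce not two but $J_I$-positively many pairwise disjoint successors in an $\gw$-branching Luzin scheme: the assumption ``$\dot x$ is not Cohen'' yields a closed nowhere dense $N$ in the image with $f^{-1}(N)\notin I$, openness of $f$ makes $f^{-1}(N)$ closed nowhere dense in the domain, and one then picks basic open sets accumulating to it. Proposition~\ref{positive} guarantees that the resulting set $\pi[T]$ is $I$-positive, and the scheme yields the desired Borel automorphism directly. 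Replacing your binary split by a $J_I$-positive split is precisely what circumvents the compactness obstruction, and arranging such a rich split is what the openness of $f$ (after the topology extension) buys.
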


\begin{corollary}
  Let $I$ be a $\sigma$-ideal generated by closed sets on a
  Polish space $X$. The following are equivalent:
  \begin{itemize}
  \item $P_I$ does not add Cohen reals,
  \item for any $B\in P_I$ and any continuous function
    $f:B\rightarrow\baire$ there is $C\subseteq B$, $C\in
    P_I$ such that $f$ is $1$-$1$ or constant on $C$.
  \end{itemize}
\end{corollary}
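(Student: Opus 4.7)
I would prove both directions separately, using Theorem \ref{degrees} as the main lever in each.

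For $(1)\Rightarrow(2)$: fix $B\in P_I$ and a continuous $f:B\to\baire$. Force with $P_I$ below $B$ with generic real $\dot x_{\mathit{gen}}$ and apply Theorem \ref{degrees} to the real $f(\dot x_{\mathit{gen}})$. By (1), the Cohen case is excluded, so $f(\dot x_{\mathit{gen}})$ is either a ground-model real or has the same Borel degree as $\dot x_{\mathit{gen}}$. A density argument in $P_I\restriction B$ produces $B'\leq B$ deciding which case occurs, together with its witness: either a specific $y\in V$ with $B'\Vdash f(\dot x_{\mathit{gen}})=\check y$, or a specific ground-model Borel automorphism $h$ of $\baire$ with $B'\Vdash h(\dot x_{\mathit{gen}})=f(\dot x_{\mathit{gen}})$. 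In the first case set $C=f^{-1}(\{y\})\cap B'$, a Borel set on which $f$ is constant; in the second set $C=\{x\in B':f(x)=h(x)\}$, a Borel set on which $f$ is injective because $h$ is. In either case $C$ is $I$-positive: otherwise $B'\setminus C\in P_I$ would force the negation of what $B'$ decided.

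For $(2)\Rightarrow(1)$: argue by contradiction, assuming $P_I$ adds a Cohen real $\dot c$ below some $B\in P_I$, which we may take to live in $\cantor$. Since $I$ is generated by closed sets, continuous reading of names (cf.~\cite[Theorem 4.1.2]{z:book2}) allows strengthening $B$ so that $\dot c=f(\dot x_{\mathit{gen}})$ for a continuous $f:B\to\cantor$. Apply (2) to obtain $C\subseteq B$ in $P_I$ on which $f$ is constant or $1$-$1$. A constant $f$ forces $\dot c$ to a ground-model value, contradicting Cohenness. In the injective case, Lusin--Suslin identifies $f\restriction C$ with a Borel isomorphism onto a Borel set $f(C)\subseteq\cantor$, and Cohenness of $\dot c$ translates into: for every $V$-meager Borel $M\subseteq\cantor$, the set $f^{-1}(M)\cap C$ lies in $I$. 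Fix once and for all a continuous $g:\cantor\to\cantor$ with no $1$-$1$ or constant restriction on any non-meager Borel subset of $\cantor$---for example, the projection $g(y)(n)=y(2n)$ onto the even bits, whose fibres are closed nowhere dense sets and which fails to be $1$-$1$ on any set comeager in a basic clopen, since flipping a sufficiently late odd bit yields a pair of distinct preimages with equal $g$-value. Apply (2) a second time, to $g\circ f:C\to\cantor$, producing $C'\subseteq C$ in $P_I$ on which $g\circ f$ is $1$-$1$ or constant. Transporting through the Borel isomorphism $f\restriction C$, we find that $g$ is $1$-$1$ or constant on $f(C')$; and $f(C')$ must be non-meager, for otherwise $C'\subseteq f^{-1}(f(C'))\cap C$ would lie in $I$, contradicting $C'\in P_I$. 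This contradicts the choice of $g$.

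The main obstacle is the injective subcase of $(2)\Rightarrow(1)$: the mere existence of a Borel isomorphism $f\restriction C$ does not by itself contradict $\dot c$ being Cohen, because Borel isomorphisms need not preserve Cohenness. The resolution is to apply (2) a second time, to a composition $g\circ f$ with $g$ chosen in advance as a universal witness to the failure of the $1$-$1$-or-constant dichotomy on non-meager Borel subsets of $\cantor$, and then to transport the resulting dichotomy back through $f$, using the key fact that $f$-preimages of meager sets lie in $I$ to guarantee that $f(C')$ is non-meager.
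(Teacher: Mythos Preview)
Your proof is correct. The paper states this as a corollary without proof, so there is no detailed argument to compare against; the direction $(1)\Rightarrow(2)$ is exactly the intended immediate consequence of Theorem~\ref{degrees}, and your write-up of it is clean.

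For $(2)\Rightarrow(1)$ the paper gives nothing explicit, and your composition trick with the even-bit projection $g$ is a valid and natural way to finish. One remark: once you have $C'\in P_I$ on which both $f$ and $g\circ f$ are injective, you can reach the contradiction a little more directly without the non-meager transfer. Since $g\circ f$ is injective on $C'$, the generic $\dot g$ lies in $V[(g\circ f)(\dot g)]=V[c_e]$, where $c_e$ is the even-bit sequence of the Cohen real $c=f(\dot g)$; hence $c=f(\dot g)\in V[c_e]$ and so $V[c]=V[c_e]$. But the odd-bit sequence $c_o$ is Cohen over $V[c_e]$, so $c_o\notin V[c_e]$, contradicting $c_o\in V[c]$. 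This is the same idea as yours---exploiting that $g$ genuinely loses information on the Cohen side while $(2)$ forces it not to lose information on the $P_I$ side---just packaged via degrees of constructibility rather than via the category-theoretic fact that $f(C')$ is non-meager. Either route is fine.
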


\medskip

This paper is organized as follows. In Section
\ref{sec:trees} we introduce the tree forcing notions $Q(J)$
and relate their forcing properties with the Kat\v etov
properties of $J$. In Section \ref{sec:closure} we show how
to assciate an ideal $J_I$ to a $\sigma$-ideal $I$ and how
forcing properties of $P_I$ determine Kat\v etov properties
of $J$. In Section \ref{sec:representation} we show that in
the nontrivial case the forcing notions $P_{I^*}$ and
$Q(J_I)$ are equivalent. In Section \ref{sec:miller} we
prove Proposition \ref{millertheorem}. In Section
\ref{sec:bordeg} we prove Theorem \ref{degrees}.

\section{Notation}

The notation in this paper follows the set theoretic
standard of \cite{jech:set}. Notation concerning idealized
forcing follows \cite{z:book2}. 

For a poset $P$ we write $\ro(P)$ for the Boolean algebra of
regular open sets in $P$. For a Boolean algebra $B$ we write
$\st(B)$ for the Stone space of $B$.  If $\lambda$ is a
cardinal, then $\Coll(\omega,\lambda)$ stands for the poset
of finite partial functions from $\omega$ into $\lambda$,
ordered by inclusion.

If $T\subseteq Y^{<\omega}$ is a tree and $t\in T$ is a
node, then we write $T\restriction t$ for the tree $\{s\in
T: s\subseteq t\ \vee\ t\subseteq s\}$.  For $t\in T$ we
denote by $\suc_T(t)$ the set $\{y\in Y: t^\smallfrown y\in
T\}$.  We say that $t\in T$ is a \textit{splitnode} if
$|\suc_T(t)|>1$. The set of all splitnodes of $T$ is denoted
by $\spl(T)$.

\section{Combinatorial tree forcings}\label{sec:trees}

In this section we assume that $J$ is a family of subsets of
a countable set $\dom(J)$. We assume that $\omega\notin J$
and that $J$ is \textit{hereditary}, i.e.  if $a\subseteq
b\subseteq\dom(J)$ and $b\in J$, then $a\in J$.
Occasionally, we will require that $J$ is an ideal. We say
that $a\subseteq\dom(J)$ is \textit{$J$-positive} if
$a\notin J$.  For a $J$-positive set $a$ we write
$J\restriction a$ for the family of all subsets of $a$ which
belong to $J$.

\begin{definition}
  The poset $Q(J)$ consists of those trees
  $T\subseteq\dom(J)^{<\omega}$ for which every node $t\in
  T$ has an extension $s\in T$ satisfying $\suc_T(s)\not\in
  J$. $Q(J)$ is ordered by inclusion.
\end{definition}

Thus the Miller forcing is just $Q(J)$ when $J$ is the
Fr\'echet ideal on $\gw$. $Q(J)$ is a forcing notion adding
the generic branch in $\dom(J)^\omega$, which also
determines the generic filter. We write $\dot g$ for the
canonical name for the generic branch. Basic fusion
arguments literally transfer from the Miller forcing case to
show that $Q(J)$ is proper and preserves the Baire category.

\begin{proposition}
  The forcing $Q(J)$ is equivalent to a forcing $P_I$ where
  $I$ is a $\sigma$-ideal generated by closed sets.
\end{proposition}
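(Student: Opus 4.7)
The plan is to realize $Q(J)$ as $P_I$ for the Polish space $X=\dom(J)^\omega$ with the product of discrete topologies, letting $\mathcal{F}$ be the collection of closed $F\subseteq X$ with $[T]\not\subseteq F$ for every $T\in Q(J)$, and taking $I$ to be the $\sigma$-ideal on the Borel subsets of $X$ generated by $\mathcal{F}$. I will argue that the map $\phi:Q(J)\to P_I$, $T\mapsto [T]$, is a dense embedding.

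The technical workhorse is a one-step avoidance lemma: given $T\in Q(J)$ and $F\in\mathcal{F}$, some $T'\subseteq T$ in $Q(J)$ satisfies $[T']\cap F=\emptyset$. This is immediate: pick $x\in[T]\setminus F$ (which exists because $F\in\mathcal{F}$), use closedness of $F$ to find an initial segment $t\subseteq x$ with $[t]\cap F=\emptyset$, and set $T'=T\har t$. Iterating along a Miller-style fusion, in which $T_{n+1}\leq_n T_n$ preserves the first $n$ $J$-positive splitnodes of $T_n$ and the avoidance lemma is applied above every such splitnode at stage $n$ to kill $F_n$, yields $T_\infty\in Q(J)$ with $[T_\infty]\cap F_n=\emptyset$ for every $n$. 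This shows $[T]\notin I$, and the same argument shows that a closed set $F$ is in $I$ exactly when $F\in\mathcal{F}$; consequently $T\perp T'$ in $Q(J)$ iff $[T]\cap[T']\in\mathcal{F}$ iff $[T]\cap[T']\in I$, which handles the incompatibility half of the dense embedding.

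For density of the image of $\phi$ in $P_I$ I would proceed in two steps. The first is a closed-set dichotomy: for a tree $T_B\subseteq\dom(J)^{<\omega}$, the transfinite derivation $T_B^{(0)}=T_B$, $T_B^{(\alpha+1)}=\{t\in T_B^{(\alpha)}:\exists s\supseteq t$ in $T_B^{(\alpha)}$ with $\suc_{T_B^{(\alpha)}}(s)\notin J\}$, and $T_B^{(\lambda)}=\bigcap_{\alpha<\lambda}T_B^{(\alpha)}$ at limits, stabilizes at some countable ordinal with value $T_B^*$. Using hereditariness of $J$ one checks by induction that every $Q(J)$-subtree of $T_B$ is contained in every $T_B^{(\alpha)}$, so $T_B^*$ is either a member of $Q(J)$ (and then $[T_B^*]\subseteq[T_B]$) or empty (in which case $[T_B]\in\mathcal{F}$). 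The second step is the selection property that every $I$-positive Borel set contains an $I$-positive closed subset; this is applicable here because the predicate ``$F\in\mathcal{F}$'' is $\coana$ in a code for $F$, so $I$ falls within the framework of \cite[Section~3.8]{z:book2}. Combined, these give: for an $I$-positive Borel $B$, pick an $I$-positive closed $F\subseteq B$, apply the dichotomy to $F$, and obtain $T\in Q(J)$ with $[T]\subseteq F\subseteq B$.

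The step I expect to be the main obstacle is the selection ingredient for the specific $I$ in the generality of an arbitrary hereditary $J$; if the black-box appeal turns out to require more than $I$ visibly provides, the fallback is to unfold the Borel code of $B$ directly into a parallel fusion and produce $T$ in one pass, in analogy with the proof of the Hurewicz/Kechris dichotomy underlying the equivalence between Miller forcing and the forcing with Borel sets modulo $\sigma$-compacta.
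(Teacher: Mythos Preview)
Your setup and the paper's define the same $\sigma$-ideal: the paper takes $I_J$ to be generated by the closed sets $A_f=\{x:\forall n\ x(n)\in f(x\restriction n)\}$ for $f:\dom(J)^{<\omega}\to J$, and one checks easily that $A_f\in\mathcal F$ and, via your derivation dichotomy, that every closed set in $\mathcal F$ lies in the $\sigma$-ideal generated by the $A_f$'s. Your avoidance-plus-fusion argument for $[T]\notin I$ and your rank argument for closed sets are fine (modulo routine bookkeeping in the fusion).

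The genuine gap is exactly where you flagged it. The Proposition is stated for an \emph{arbitrary} hereditary $J$ with $\omega\notin J$, with no definability hypothesis whatsoever. Your claim that ``$F\in\mathcal F$ is $\coana$ in a code for $F$'' is not justified in this generality: the derivation you describe uses the predicate ``$a\notin J$'' at every step, so the complexity of $\mathcal F$ is at least that of $J$, and the black boxes in \cite[Section~3.8]{z:book2} or Solecki's theorem do not apply. So the primary route to density fails outright for undefinable $J$, and even for Borel $J$ it is heavier machinery than needed.

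The paper avoids this entirely by a single game argument. Given analytic $A=\proj(C)$ with $C\subseteq\baire\times\baire$ closed, Player~I plays pairs $\langle s_n,m_n\rangle\in\dom(J)^{<\omega}\times\omega$, Player~II plays sets $a_n\in J$, with the rule that the first entry of $s_{n+1}$ avoids $a_n$; Player~I wins if the resulting pair lies in $C$. The point is that the payoff is \emph{closed} regardless of how complicated $J$ is, because $J$ only enters as the move set for Player~II, not in the winning condition. So Gale--Stewart determinacy applies with no hypothesis on $J$, and one reads off from the two strategies that $A\in I_J$ (Player~II wins) or $A\supseteq[T]$ for some $T\in Q(J)$ (Player~I wins). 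This single lemma simultaneously gives $[T]\notin I_J$ and density of $\{[T]:T\in Q(J)\}$ in $P_{I_J}$; there is no need for a separate fusion or a selection theorem.

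Your fallback (``unfold the Borel code of $B$ into a parallel fusion, in analogy with the Hurewicz--Kechris dichotomy'') is exactly this game in disguise. So your proposal is correct once you abandon the selection black box and run the unfolded game directly; the paper simply starts there.
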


\begin{proof}
  To simplify notation assume $\dom(J)=\omega$. Whenever
  $f:\gwtree\to J$ is a function, let
  $A_f=\{x\in\baire:\forall n<\omega\ \ x(n)\in
  f(x\restriction n)\}$.  Note that the sets $A_f$ are
  closed. Let $I_J$ be the $\gs$-ideal generated by all sets
  of this form.

  \begin{lemma}
    An analytic set $A\subseteq\baire$ is $I_J$-positive if
    and only if it contains all branches of a tree in
    $Q(J)$.
  \end{lemma}

  \begin{proof}
    For a set $C\subseteq\baire\times\baire$ we consider the
    game $G(C)$ between Players I and II in which at $n$-th
    round Player I plays a finite sequence $s_n\in\btree$
    and a number $m_n\in\omega$, and Player II answers with
    a set $a_n\in J$. The first element of the sequence
    $s_{n+1}$ must not belong to the set $a_n$. In the end
    let $x$ be the concatenation of $s_n$'s and let $y$ be
    the concatenation of $m_n$'s. Player I wins if $\langle
    x,y\rangle\in C$.
    \begin{claim}
      Player II has a winning strategy in $G(C)$ if and only
      if $\proj(C)\in I_J$. If Player I has a winning
      strategy in $G(C)$, then $\proj(C)$ contains all
      branches of a tree in $Q(J)$.
    \end{claim}
    The proof of the above Claim is standard (cf.
    \cite[Theorem 21.2]{kechris:classical}) and we omit it.
    Now, if $C\subseteq\baire\times\baire$ is closed such
    that $\proj(C)=A$, then determinacy of $G(C)$ gives the
    desired property of $A$.
  \end{proof}
  This shows that $P_{I_J}$ has a dense subset isomorphic to
  $Q(J)$, so the two forcing notions are equivalent.
\end{proof}

If $J$ is coanalytic, then the $\gs$-ideal $I_J$ associated
with the poset $Q(J)$ is $\coana$ on $\ana$. The further,
finer forcing properties of $Q(J)$ depend on the position of
$J$ in the Kat\v etov ordering.

\begin{definition}[\cite{katetov:filters}]
  Let $H$ and $F$ be hereditary families of subsets of
  $\dom(H)$ and $\dom(F)$ respectively. $H$ is \emph{Kat\v
    etov above} $F$, or $H\geq_K F$, if there is a function
  $f:\dom(H)\rightarrow\dom(F)$ such that $f^{-1}(a)\in H$
  for each $a\in F$.
\end{definition}

\noindent For a more detailed study of this order see
\cite{hrusak:survey}. It turns out that for many
preservation-type forcing properties $\phi$ there is a
critical hereditary family $H_\phi$ such that $\phi(Q(J))$
holds if and only if $J\restriction a\not\geq_K H_\phi$ for
every $a\notin J$.  This section collects several results of
this kind.

\begin{definition}
  We say that $a\subseteq \bintree$ is \textit{nowhere dense}
  if every finite binary sequence has an extension such that
  no further extension falls into $a$. $\nwd$ stands for the
  ideal of all nowhere dense subsets of $\bintree$.
\end{definition}

\begin{theorem}\label{nocohen}
  $Q(J)$ does not add Cohen reals if and only if
  $J\restriction a\not\geq_K\nwd$ for every $J$-positive set
  $a$.
\end{theorem}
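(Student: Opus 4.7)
The plan is to handle the two directions of the biconditional separately, each by contrapositive.

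The easier direction, that a Kat\v etov witness produces a Cohen real, admits a direct construction. Given $f : a \to \bintree$ witnessing $J \restriction a \geq_K \nwd$ with $a$ $J$-positive, I first assume $f(n) \neq \emptyset$ for all $n \in a$ (otherwise append a bit), set $T := a^{<\omega}$ (which lies in $Q(J)$ since every successor set equals $a \notin J$), and define a continuous $c : [T] \to 2^\omega$ by concatenation, $c(x) := f(x(0))^\frown f(x(1))^\frown \cdots$. To show $T \Vdash c(\dot g)$ is Cohen, I fix a ground-model downward-closed $X \in \nwd$ and $T' \leq T$, pick $t \in T'$ with $\suc_{T'}(t) \notin J$, and put $s := c(t)$. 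The shift $X_s := \{u : s^\frown u \in X\}$ inherits nowhere-denseness from $X$, so by the Kat\v etov hypothesis $f^{-1}(X_s) \in J$, and $\suc_{T'}(t) \setminus f^{-1}(X_s)$ contains some $n$. For such $n$ one has $c(t^\frown n) = s^\frown f(n) \notin X$, and $T' \restriction t^\frown n$ forces $c(\dot g) \supseteq c(t^\frown n) \notin X$; since $X$ is downward closed this forces $c(\dot g) \notin [X]$. Density over $X$ and $T'$ yields Cohen-ness.

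For the harder converse (Cohen implies some Kat\v etov), I would begin with $T \in Q(J)$ and continuous $c : [T] \to 2^\omega$ for which $T \Vdash c(\dot g)$ is Cohen. By properness together with a standard fusion I would refine $T$ so that every node is a $J$-splitnode and, writing $c^*(t)$ for the longest common prefix of $\{c(x) : x \in [T\restriction t]\}$, $c^*(t^\frown n) \supsetneq c^*(t)$ for every $t$ and $n \in \suc_T(t)$; this defines maps $f_t : \suc_T(t) \to \bintree \setminus \{\emptyset\}$ via $c^*(t^\frown n) = c^*(t)^\frown f_t(n)$. The target is some $t$ for which $f_t$ witnesses $J \restriction \suc_T(t) \geq_K \nwd$. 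Arguing by contradiction, if for every $t$ there is $X_t \in \nwd$ with $f_t^{-1}(X_t) \notin J$, I would run a fusion that at every splitnode $t$ restricts $\suc_T(t)$ to $f_t^{-1}(X_t)$, producing $T^* \leq T$ in $Q(J)$. The values $c(x)$ for $x \in [T^*]$ are then concatenations of chunks drawn from the ground-model nowhere-dense sets $X_t$ along the splitnodes of $x$, and the final step is to show that these concatenations collectively lie in some ground-model meager subset of $2^\omega$, contradicting Cohen-ness of $c(\dot g)$.

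The principal obstacle is exactly this last step: aggregating the per-splitnode data $(X_t)_{t \in T^*}$ into a single ground-model meager cover of $c([T^*])$. Since only countable unions of nowhere-dense sets are guaranteed meager, while a $Q(J)$ tree can have continuum many branches, the fusion must be organized so that at each level only countably many ``level-wise'' nowhere-dense chunks effectively control all branches at once; this is typically achieved by choosing, at each fusion level, uniform $X_t$'s across the finite antichain of splitnodes being processed, exploiting the freedom granted by the no-Kat\v etov hypothesis to select compatible chunks. The continuous reading of names for $Q(J)$ is what ensures the resulting set $M \subseteq 2^\omega$ is a genuine ground-model meager set, and the combinatorial device is of a piece with the analogous arguments for the other critical families $H_\phi$ appearing elsewhere in this section.
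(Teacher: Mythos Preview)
Your easy direction is correct and matches the paper's argument.

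For the hard direction your outline has a genuine gap exactly where you flag it, and the suggested remedy of taking uniform $X_t$'s across a fusion level does not repair it. A concrete obstruction: suppose after refinement every chunk map satisfies $f_t(n)=0^{n}1$. Then each $X_t=\{0^{k}1:k\in\omega\}$ is nowhere dense in $\bintree$ and $f_t^{-1}(X_t)$ is the full successor set, yet the set of concatenations $c([T^*])$ is precisely the set of binary sequences with infinitely many $1$'s, which is comeager. So restricting chunks to a nowhere dense subset of $\bintree$, even the same one at every node, tells you nothing about meagerness of the concatenated outputs. More structurally, your target of exhibiting one of the chunk maps $f_t$ as the Kat\v etov witness is too narrow: each $f_t$ may well have nowhere dense range (so no $f_t$ can ever be a Kat\v etov reduction to $\nwd$) while $J\restriction a\geq_K\nwd$ still holds for some other $a$ and some unrelated witness.

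The paper argues the other contrapositive and uses the Kat\v etov hypothesis differently. At each splitnode $t$ it fixes a full branch $b_{t,n}\in[T]$ through every $t^\smallfrown n$ and applies the hypothesis to the map $n\mapsto f(b_{t,n})$ into $\cantor$ (using that $\nwd$ is Kat\v etov-equivalent to the nowhere-dense ideal on a countable dense subset of $\cantor$) to obtain a $J$-positive $a_t\subseteq\suc_T(t)$ with $\{f(b_{t,n}):n\in a_t\}$ nowhere dense in $\cantor$. The dense open set $O$ and the subtree $S$ are then built \emph{simultaneously}, via a sufficiently generic filter on the countable poset of pairs $\langle s_p,O_p\rangle$ with $s_p$ a finite set of splitnodes, $O_p$ clopen, $O_p$ disjoint from $\{f(b_{t,n}):t\in s_p,\ n\in a_t\}$, and ordered so that newly added splitnodes have their entire cone in $[T]$ mapping outside the already-committed $O_p$. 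Nowhere denseness of the branch images lets $O_p$ be extended into any basic open set; continuity of $f$ together with $f(b_{t,n})\notin O_p$ lets new splitnodes be added above any $t^\smallfrown n$ with $n\in a_t$. This interleaving of the tree with the single dense open set it must avoid is the device your approach is missing.
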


\begin{proof}
  On one hand, suppose that there exists a $J$-positive set
  $a$ such that $J\restriction a\geq_K \nwd$ as witnessed by
  a function $f:a\to\bintree$. Then, the tree $a^{<\gw}$
  forces the concatenation of the $f$-images of numbers on
  the generic sequence to be a Cohen real.

  On the other hand, suppose that $J\restriction
  a\not\geq_K\nwd$. Let $T\in Q(J)$ be a condition and $\dot
  y$ be a name for an infinite binary sequence. We must show
  that $\dot y$ is not a name for a Cohen real. That is, we
  must produce a condition $S\leq T$ and an open dense set
  $O\subseteq\cantor$ such that $S\Vdash\dot y\notin\check
  O$.

  Strengthening the condition $T$ if necessary we may assume
  that there is a continuous function $f:[T]\to\cantor$ such
  that $T\Vdash \dot y=\dot f(\dotxgen)$. For every
  splitnode $t\in T$ and for every $n\in \suc_T(t)$ pick a
  branch $b_{t,n}\in [T]$ such that $t^\smallfrown
  n\subseteq b_{t, n}$. Use the Kat{\v e}tov assumption to
  find a $J$-positive subset $a_t\subseteq \suc_T(t)$ such
  that the set $\{f(b_{t,n}):n\in a_t\}\subseteq\cantor$ is
  nowhere dense.

  Consider the countable poset $P$ consisting of pairs
  $p=\langle s_p, O_p\rangle$ where $s_p$ is a finite set of
  splitnodes of $T$, $O_p\subseteq\cantor$ is a clopen set,
  and $O_p\cap \{f(b_{t,n}):t\in s_p, n\in a_t\}=\emptyset$.
  The ordering is defined by $q\leq p$ if
  \begin{itemize}
  \item $s_p\subseteq s_q$ and $O_p\subseteq O_q$,
  \item if $t\in s_q\setminus s_p$, then $f(x)\notin O_p$
    for each $x\in[T]$ such that $t\subseteq x$.
  \end{itemize}
  Choose $G\subseteq P$, a sufficiently generic filter, and
  define $O=\bigcup_{p\in G}O_p$ and $S\subseteq T$ to be
  the downward closure of $\bigcup_{p\in G}s_p$. Simple
  density arguments show that $O\subseteq\cantor$ is open
  dense and moreover, $S\in Q(J)$, since for every node
  $t\in\bigcup_{p\in G}s_p$ and every $n\in a_t$ we have
  $t^\smallfrown n\in S$. The definitions show that
  $f''[S]\cap O=\emptyset$ as desired.
\end{proof}

\begin{definition}
  Let $0<\eps<1$ be a real number. The ideal $S_\eps$ has as
  its domain all clopen subsets of $\cantor$ of Lebesgue
  measure less than $\eps$, and it is generated by those
  sets $a$ with $\bigcup a\not = \cantor$.
\end{definition}

\noindent This ideal is closely connected with the Fubini
property of ideals on countable sets, as shown below in a
theorem of Solecki.

\begin{definition}
  If $a\subseteq\dom(J)$ and $D\subseteq a\times\cantor$,
  then we write $$\int_a D\ dJ=\{y\in\cantor: \{j\in a:\
  \langle j,y\rangle\notin D\}\in J\}.$$ $J$ has the
  \emph{Fubini property} if for every real $\eps>0$, every
  $J$-positive set $a$ and every Borel set $D\subseteq
  a\times\cantor$ with vertical sections of Lebesgue measure
  less than $\eps$, the set $\int_a D\ dJ$ has outer measure
  at most $\eps$.
\end{definition}

\noindent Obviously, the ideals $S_\eps$ as well as all
families above them in the Kat\v etov ordering fail to have
the Fubini property. The following theorem implicitly
appears in \cite[Theorem 2.1]{solecki:filters}, the
formulation below is stated in \cite[Theorem
3.13]{hrusak:survey} and proved in \cite[Theorem
3.7.1]{meza:phd}.

\begin{theorem}[Solecki]
  Suppose $F$ is an ideal on a countable set. Then either
  $F$ has the Fubini property, or else for every (or
  equivalently, some) $\eps>0$ there is a $F$-positive set
  $a$ such that $F\restriction a\geq_K S_\eps$.
\end{theorem}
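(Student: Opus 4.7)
The plan is to construct a Kat\v etov reduction directly from the witness to the failure of the Fubini property, and then to pass from ``for some $\varepsilon$'' to ``for every $\varepsilon$'' via Kat\v etov-comparability of the ideals $S_\varepsilon$. Suppose the Fubini property fails: there are $\varepsilon > 0$, a set $a \in F^+$, and a Borel set $D \subseteq a \times \cantor$ with $\lambda(D_j) < \varepsilon$ for every $j \in a$ while $E := \int_a D\, dF$ has outer Lebesgue measure strictly greater than $\varepsilon$. Applying a section-wise inner regularity argument, I first reduce to the case where $D$ is closed in $a \times \cantor$, so each $D_j$ is a compact subset of $\cantor$ of measure less than $\varepsilon$. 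Compactness then lets one, for every $j \in a$, choose a clopen set $c_j \supseteq D_j$ with $\lambda(c_j) < \varepsilon$, and the candidate Kat\v etov reduction is $f : a \to \dom(S_\varepsilon)$ defined by $f(j) = c_j$.

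To verify that $f$ is a Kat\v etov reduction it suffices, since $F$ is an ideal and $S_\varepsilon$ is generated by the families with non-covering union, to check $f^{-1}(b) \in F$ for each generator $b$: a family of clopen sets of measure less than $\varepsilon$ with $\bigcup b \neq \cantor$. Given any witness $y_b \in \cantor \setminus \bigcup b$, one has the chain of inclusions
\[
f^{-1}(b) \;=\; \{j \in a : c_j \in b\} \;\subseteq\; \{j \in a : y_b \notin c_j\} \;\subseteq\; \{j \in a : y_b \notin D_j\},
\]
and the rightmost set lies in $F$ precisely when $y_b \in E$. Hence a sufficient condition for $f$ to work is that for every generator $b$ one can produce a point of $E$ outside $\bigcup b$; equivalently, that $E$ is \emph{dense} in $\cantor$.

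The main obstacle is arranging exactly this density, since outer Lebesgue measure exceeding $\varepsilon$ does not of itself force topological density; the set $E$ could a priori concentrate on a nowhere-dense set of positive measure. My plan is to replace the triple $(a, D, \varepsilon)$ by a rescaled one in which the integral set is dense. Using the Lebesgue density theorem applied to a measurable envelope of $E$, I would locate a clopen cube $C_0 \subseteq \cantor$ in which $E$ has relative outer measure arbitrarily close to $1$, then pass via the homeomorphism $C_0 \cong \cantor$ and the induced renormalisation of Lebesgue measure to a new Fubini failure with parameter $\varepsilon' > 0$ and a new set $E'$ of nearly full outer measure. A further maximality or iterated density-point argument is then needed to upgrade ``outer measure close to $1$'' to genuine topological density of $E'$ in $\cantor$, and this is the step I expect to demand the most care, since the measure-theoretic and topological notions of largeness can diverge. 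Once density is secured, the verification of the previous paragraph furnishes a Kat\v etov reduction $F \restriction a' \geq_K S_{\varepsilon'}$ for some $a' \in F^+$. Finally, the equivalence between the ``for some $\varepsilon$'' and ``for every $\varepsilon$'' clauses will follow from mutual Kat\v etov-comparability of the ideals $S_\varepsilon$ for different parameters, obtained by restricting $S_{\varepsilon'}$ to clopens of measure less than $\varepsilon$ in one direction, and by tensoring with a fixed clopen factor under the product homeomorphism $\cantor \cong \cantor \times \cantor$ in the other.
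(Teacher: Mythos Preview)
The paper does not give its own proof of this theorem; it is stated with attribution to Solecki and references to Hru\v s\'ak's survey and Meza's thesis for the argument. So there is no in-paper proof to compare against, and I can only comment on the soundness of your sketch.

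Your reformulation---that the candidate map $f(j)=c_j$ is a Kat\v etov reduction precisely when the integral set $E$ is topologically dense---is correct and is the heart of the matter. Two steps of the sketch, however, do not close as written.

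First, the section-wise inner regularity reduction goes the wrong way a priori. Replacing $D_j$ by compact $K_j\subseteq D_j$ gives $\{j:y\notin K_j\}\supseteq\{j:y\notin D_j\}$, and a \emph{super}set of an $F$-set need not lie in $F$; hence $E^K\subseteq E$ and one may lose the hypothesis $\mu^*(E^K)>\varepsilon$. The standard repair is to choose the errors $\mu(D_j\setminus K_j)$ summable and invoke Borel--Cantelli together with $\mathrm{Fin}\subseteq F$, so that $E\setminus E^K$ is null; but this needs to be said.

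Second, and more substantially, the Lebesgue-density-plus-cube-rescaling plan does not secure density of $E'$. Passing to a small clopen cube $C_0$ where the envelope $\hat E$ has relative measure $>1-\delta$ does push $\mu^*(E')$ towards $1$, but it rescales the section bound to $\varepsilon/\mu(C_0)$, which can exceed $1$ once $C_0$ is small. Even when $\varepsilon/\mu(C_0)<1$, outer measure $>1-\delta$ still allows $E'$ to miss a clopen set of measure $\delta$, so density does not follow, and no iteration of this step converges. The clean fix is to use instead a Borel measure-isomorphism $\phi$ from the envelope $\hat E$ (with normalised measure) onto $(2^\omega,\lambda)$: then $E'=\phi(E)$ has outer measure exactly $1$, since $\hat E$ is an envelope, and \emph{that} forces density. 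The sections $\phi(D_j\cap\hat E)$ are now merely Borel, so the inner-regularity-plus-Borel--Cantelli step must be performed \emph{after} applying $\phi$, not before; the new section bound is $\varepsilon/\mu(\hat E)<1$.

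Your final claim, that the $S_\varepsilon$ are mutually Kat\v etov-comparable, is correct as a fact, though the tensoring sketch for the harder direction (larger parameter Kat\v etov-above smaller) would need more detail to stand on its own.
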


\noindent By $\mu$ we denote the outer Lebesgue measure on
$\cantor$. For a definition of \textit{preservation of outer
  Lebesgue measure} and further discussion on this property
see \cite[Section 3.6]{z:book2}.

\begin{theorem}\label{fubini}
  Suppose that $J$ is a universally measurable ideal. $Q(J)$
  preserves outer Lebesgue measure if and only if $J$ has
  the Fubini property.
\end{theorem}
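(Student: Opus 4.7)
My plan is to deploy Solecki's theorem at both ends: failure of the Fubini property is equivalent, for every (hence some) $\eps > 0$, to the existence of a $J$-positive set $a$ and a Kat\v etov map $f\colon a \to \dom(S_\eps)$ realizing $J\restriction a \geq_K S_\eps$. Each direction of the theorem then becomes a forcing-theoretic translation of this combinatorial dichotomy, with the universal measurability of $J$ ensuring that the various Fubini integrals and measurable-section manipulations are well-defined.

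For the $(\Rightarrow)$ direction (Fubini fails $\Rightarrow$ $Q(J)$ does not preserve outer measure), I would invoke Solecki's theorem at $\eps_n = 2^{-n}$, choosing $J$-positive sets $a_n \subseteq \dom(J)$ and Kat\v etov witnesses $f_n\colon a_n \to \dom(S_{2^{-n}})$. I then form the condition $T \in Q(J)$ with $\suc_T(t) = a_{|t|}$ at every node; this lies in $Q(J)$ because each $a_n$ is $J$-positive, so every node is a splitnode. For any fixed $y \in V \cap \cantor$, the family $b_{n,y} = \{c \in \dom(S_{2^{-n}}) : y \notin c\}$ has $\bigcup b_{n,y} = \cantor \setminus \{y\} \neq \cantor$, hence lies in $S_{2^{-n}}$, and the Kat\v etov property forces $\{j \in a_n : y \notin f_n(j)\} = f_n^{-1}(b_{n,y}) \in J$. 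A routine level-by-level thinning of $T$ (subtracting these $J$-null sets) yields $T_y \leq T$ forcing $y \in f_n(\dot g(n))$ for every $n$, so by genericity every $y \in \cantor \cap V$ ends up in $\limsup_n f_n(\dot g(n))$ inside $V[\dot g]$. Since $\sum_n \mu(f_n(\dot g(n))) < \sum_n 2^{-n} < \infty$, the Borel--Cantelli lemma applied in $V[\dot g]$ gives $\mu(\limsup_n f_n(\dot g(n))) = 0$, so $\cantor \cap V$ acquires outer measure zero in the extension, refuting preservation.

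For the $(\Leftarrow)$ direction (Fubini $\Rightarrow$ $Q(J)$ preserves outer measure), given $T \in Q(J)$, a name $\dot O$ for an open set of measure $< \eps$, and a summable sequence $\eps_n$ with $\sum \eps_n < \eps$, I first strengthen $T$ so that $\dot O$ admits a continuous reading of names (standard for Miller-type forcings) and is decomposed into incremental pieces of Lebesgue mass at most $\eps_n$ at fusion stage $n$. I then run a fusion enumerating the splitnodes: at the $n$-th splitnode $t$, the continuous code determines a Borel set $D_t \subseteq \suc_T(t) \times \cantor$ whose vertical sections have measure $< \eps_n$, so the Fubini property yields $\mu^*(\int_{\suc_T(t)} D_t \, dJ) \leq \eps_n$. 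I then thin $\suc_T(t)$ to a $J$-positive subset avoiding the ``capturable'' $y$'s chosen outside this integral set, losing at stage $n$ only a set of $y$'s of outer measure $\leq \eps_n$. The fusion limit $T' \leq T$ forces the complementary $V$-set, of outer measure $\geq 1 - \sum_n \eps_n > 1 - \eps$, to escape $\dot O$, which suffices for preservation.

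The main obstacle is the $(\Leftarrow)$ fusion: orchestrating the Fubini applications at successive splitnodes so that the captured-$y$ errors telescope into a summable total, while maintaining $J$-positivity of the thinned successor sets and compatibility with the continuous reading of $\dot O$'s code. Once this bookkeeping is in place, Fubini supplies the measure bound at each step and a Borel--Cantelli-style summation assembles the surviving escape set; the $(\Rightarrow)$ direction by contrast requires only a single condition plus Borel--Cantelli and should be the easier half of the argument.
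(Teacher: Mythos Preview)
Your approach mirrors the paper's in both directions, and the $(\Rightarrow)$ half is essentially identical. In the $(\Leftarrow)$ half, however, there is a quantifier slip in your final assembly: you cannot produce a \emph{single} fusion limit $T'$ that forces the entire complementary set $\cantor\setminus\bigcup_n\int D_{t_n}\,dJ$ out of $\dot O$. Different points $z$ in that complement require different thinnings of the successor sets, and in general no one condition forces a fixed ground-model set of positive measure to be disjoint from $\dot O$ (already for Miller forcing, a name like ``the basic clopen set determined by the parity of $\dot g(0)$'' shows this fails).

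The correct order of operations, which the paper follows and which your ingredients support, is: first arrange the continuous reading $h$ and the sets $D_t$ with summable section bounds $f(t)$; compute $E=\bigcup_{t\in\spl(T)}\int_{\suc_T(t)}D_t\,dJ$, which by universal measurability of $J$ is measurable with $\mu(E)\leq\sum_t f(t)<\delta$; then, given the target set $Z$ with $\mu^*(Z)=\delta$, choose a single $z\in Z\setminus E$; and only \emph{then} build $S\leq T$ by thinning each $\suc_T(t)$ to $\{n: z\notin h(t^\smallfrown n)\}$, which is $J$-positive precisely because $z\notin\int_{\suc_T(t)}D_t\,dJ$. No further fusion is needed at this stage. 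Your outline also omits the set $Z$ entirely, but it is essential: preservation of outer measure is verified one $Z$ at a time, and the point $z$ must come from $Z$.
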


\begin{proof}
  Suppose on one hand that $J$ fails to have the Fubini
  property. Find a sequence of $J$-positive sets $\langle
  b_n:n\in\gw\rangle$ such that $J\restriction b_n\geq_K
  S_{2^{-n}}$, as witnessed by functions $f_n$.  Consider
  the tree $T$ of all sequences $t\in\dom(J)^{<\gw}$ such
  that $t(n)\in b_n$ for each $n\in\dom(t)$. Let $\dot B$ be
  a name for the set $\{z\in\cantor:\exists^\infty n\ z\in
  f_n(\dotxgen(n))\}$.  $T$ forces that the set $\dot B$ has
  measure zero, and the definition of the ideals $S_\eps$
  shows that every ground model point in $\cantor$ is forced
  to belong to $\dot B$.  Thus $Q(J)$ fails to preserve
  Lebesgue outer measure at least below the condition $T$.

  On the other hand, suppose that the ideal $J$ does have
  the Fubini property. Suppose that $Z\subseteq\cantor$ is a
  set of outer Lebesgue measure $\gd$, $\dot O$ is a
  $Q(J)$-name for an open set of measure less or equal to
  $\eps<\gd$, and $T\in Q(J)$ is a condition.  We must find
  a point $z\in Z$ and a condition $S\leq T$ forcing $\check
  z\notin \dot O$.

  By a standard fusion argument, thinning out the tree $T$
  if necessary, we may assume that there is a function
  $h:\spl(T)\to\basis$ such that $$T\Vdash\dot O=\bigcup\{
  h(\dotxgen\restriction n+1):\ \dotxgen\restriction
  n\in\spl(T)\}.$$ Moreover, we can make sure that if
  $t_n\in T$ is the $n$-th splitting node, then
  $T\restriction {t_n}$
  decides a subset of $\dot O$ with measure greater than
  $\eps\slash 2^n$. Hence, if we write
  $f(t_n)=\eps\slash2^n$, then for every splitnode $t\in T$
  and every $n\in \suc_T(t)$ we have $\mu(h(t^\smallfrown
  n))<f(t)$.

  Now, for every splitnode $t\in T$ let $$D_t=\{\langle
  O,x\rangle: x\in\cantor\ \wedge\ O\in\suc_T(t)\ \wedge\
  x\in h(t^\smallfrown O)\}.$$ It follows from universal
  measurability of $J$ that the set $\int_{\suc_T(t)}D_t\
  dJ$ is measurable. It has mass not greater than $f(t)$, by
  the Fubini assumption.  Since
  $\sum_{t\in\spl(T)}f(t)<\gd$, we can find $$z\in
  Z\setminus\bigcup_{t\in \spl(T)}\int_{\suc_T(t)}D_t\ dJ.$$
  Let $S\subseteq T$ be the downward closure of those nodes
  $t^\smallfrown n$ such that $t\in T$ is a splitnode and
  $n\in \suc_T(t)$ is such that $z\notin h(t^\smallfrown
  n)$. $S$ belongs to $Q(J)$ by the choice of the point $z$
  and $S\Vdash \check z\notin\dot O$, as required.
\end{proof}

An \textit{independent real} is a set $x$ of natural numbers
in a generic extension such that both $x$ and the complement
of $x$ meet every infinite set of natural numbers from the
ground model.

\begin{definition}
  $\ssplit$ is the family of nonsplitting subsets of
  $\bintree$, i.e. those $a\subseteq\bintree$ for which
  there is an infinite set $c\subseteq\gw$ such that
  $t\restriction c$ is constant for every $t\in a$.
\end{definition}

Obviously, $\ssplit$ is an analytic set, but it is not clear
whether it is also coanalytic. 

\begin{question}
  Is $\ssplit$ a Borel set?
\end{question}

In the following theorem we show that in two quite general
cases $\ssplit$ is critical for the property of adding
independent reals.

Note that if $J$ is an ideal, $H$ is hereditary and $H'$ is
the ideal generated by $K$, then $J\leq_K H$ if and only if
$J\leq_K H'$.  Therefore, in case $J$ is an ideal,
$J\geq_K\ssplit$ is equivalent to $J$ being Kat\v etov above
the ideal generated by $\ssplit$. The latter is analytic, so
in particular it has the Baire property.

\begin{theorem}\label{splitting}
  Suppose that $J$ is coanalytic or $J$ is an ideal with the
  Baire property. $Q(J)$ does not add independent reals if
  and only if $J\restriction a\not\geq_K\ssplit$ for every
  $J$-positive $a$.
\end{theorem}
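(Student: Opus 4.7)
My plan is to mirror the two-direction strategy of Theorem~\ref{nocohen}, now with $\ssplit$ playing the role of $\nwd$ and an independent real in place of a Cohen real.

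For the direction that produces an independent real from a Kat{\v e}tov reduction, assume $f\colon a\to\bintree$ witnesses $J\restriction a\geq_K\ssplit$. Below the condition $T=a^{<\gw}$, after thinning $a$ so that all chunks $f(\dotxgen(n))$ have a common length $L$, a natural candidate for an independent real is the concatenation $\dot z=f(\dotxgen(0))^\smallfrown f(\dotxgen(1))^\smallfrown\cdots$. For any infinite $c\in V$ and any $n\in\gw$, the shifted family $b_n=\{t\in\bintree:t\restriction((c-nL)\cap\dom(t))\text{ is constant}\}$ belongs to $\ssplit$, so $f^{-1}(b_n)\in J$. A standard density argument in $Q(J)$ then shows that $\dotxgen(n)\notin f^{-1}(b_n)$ for cofinitely many $n$, which is the statement that the $n$th chunk of $\dot z$ takes both values inside $c\cap[nL,(n+1)L)$ whenever this interval is hit at least twice by $c$. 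This is sufficient to witness that $\dot z\restriction c$ takes both values infinitely often, hence $\dot z$ is independent.

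For the reverse direction, fix $T\in Q(J)$ and a name $\dot y\in\cantor$. By continuous reading of names, strengthen $T$ to secure a continuous $f\colon[T]\to\cantor$ with $T\Vdash\dot y=f(\dotxgen)$. For each splitnode $t\in T$ pick branches $b_{t,n}\in[T]$ through $t^\smallfrown n$, and apply the Kat{\v e}tov hypothesis to the assignment $n\mapsto f(b_{t,n})$ (interpreted via sufficiently long initial segments as a map into $\bintree$) to obtain a $J$-positive set $a_t\subseteq\suc_T(t)$, an infinite $c_t\subseteq\gw$, and a bit $v_t\in\{0,1\}$ such that $f(b_{t,n})\restriction c_t$ is constantly $v_t$ for every $n\in a_t$. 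The plan is to run a fusion through a countable poset $P$ whose conditions $\langle s_p,c_p,v\rangle$ consist of a finite set $s_p$ of committed splitnodes of $T$ sharing a common bit $v$, together with a finite subset $c_p\subseteq\bigcap_{t\in s_p}c_t$; sufficient genericity yields a subtree $S\leq T$ in $Q(J)$ and an infinite $c=\bigcup_p c_p\in V$ such that $S\Vdash \dot y\restriction c$ is eventually constant $v$, contradicting independence.

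The principal obstacle is nontriviality of the fusion: every condition $p$ must be extendible by a fresh splitnode $t\in T$ whose $c_t$ contains the current $c_p$ and whose $v_t$ matches $v$. This demands a uniform selection of the triples $(a_t,c_t,v_t)$ along $T$, and is where the hypothesis on $J$ enters. When $J$ is coanalytic, a $\coana$-uniformization theorem supplies the selector; when $J$ is an ideal with the Baire property, one first replaces $\ssplit$ by the analytic ideal it generates (Baire-measurable, as noted before the statement of the theorem) and uses a Baire-category selector instead. A preliminary pigeonhole argument fixes a single global bit $v$ before the fusion starts.
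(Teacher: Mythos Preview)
Your plan for the forward direction has a slip: you cannot thin $a$ so that $f$ takes values of a fixed length $L$, because the set $2^L\subseteq\bintree$ is finite and hence lies in $\ssplit$, so $f^{-1}(2^L)\in J$ by the Kat\v etov assumption itself. The paper simply concatenates the variable-length words $f(\dotxgen(n))$ and argues directly; your shifted sets $b_n$ are essentially the right idea once the normalisation is dropped.

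The real problem is the reverse direction. Your local step does not follow from the hypothesis as stated. Applying $J\restriction\suc_T(t)\not\geq_K\ssplit$ to a truncated map $n\mapsto f(b_{t,n})\restriction k_n$ only yields a $J$-positive $a_t$ and an infinite $c$ such that $f(b_{t,n})$ is constant on the \emph{finite} set $c\cap k_n$; it says nothing about $f(b_{t,n})\restriction c$ for the full infinite $c$, and the constant value may vary with $n$ since $\ssplit$ does not fix a bit. So the triples $(a_t,c_t,v_t)$ you describe need not exist. Even granting them, the fusion you sketch needs, for each committed finite $c_p$ and bit $v$ and each node you wish to extend, a fresh splitnode $t$ with $c_p\subseteq c_t$ and $v_t=v$; nothing in the Kat\v etov hypothesis lets you prescribe either $c_t$ or $v_t$, and a uniformisation or Baire-category selector produces definable choices, not coherent ones. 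Finally, the ``preliminary pigeonhole'' to fix a global bit is not a pigeonhole at all: one must show that the splitnodes with a given bit contain all splitnodes of some $Q(J)$-tree, which is a nontrivial rank argument.

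The paper's argument is organised quite differently and avoids all three issues. First, it does not try to get local infinite sets $c_t$; instead it runs an \emph{iterative} construction maintaining a single decreasing chain $b_0\supseteq b_1\supseteq\cdots$ of infinite sets, applying the Kat\v etov hypothesis once per splitnode to the finite decisions of $\dot y$ along $b_i$ and shrinking to $b_{i+1}$, then taking a diagonal intersection for the final $b$. Second, stabilising the bit is handled by a separate lemma showing that the family of sets of splitnodes not containing all splitnodes of a $Q(J)$-tree is an ideal. Third --- and this is where the assumption that $J$ is coanalytic or an ideal with the Baire property actually enters --- the paper proves that Player~II wins a game in which the players alternately lay down consecutive intervals of a $J$-positive set and Player~II must make her union $J$-positive; the winning strategies at each splitnode are what drive the final fusion. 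Your proposal locates the definability hypothesis in the wrong place and does not supply any substitute for this game lemma.
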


\begin{proof}
  Again, the left to right direction is easy. If
  $J\restriction a\geq_K\ssplit$ for some $J$-positive set
  $a$, as witnessed by a function $f$, then the condition
  $a^{<\gw}\in Q(J)$ forces that the concatenation of
  $\langle f(\dotxgen(n)):n\in\gw\rangle$ is an independent
  real.

  For the right to left direction, we will need two
  preliminary general facts. For a set $a\subseteq\gw$ by an
  \textit{interval in $a$} we mean a set of the form
  $[k,l)\cap a$.

  First, let $a\subseteq\gw$ be a $J$-positive set, and let
  Players I and II play a game $G(a)$, in which they
  alternate to post consequtive (pairwise disjoint) finite
  intervals $b_0, c_0, b_1, c_1,\dots$ in the set $a$.
  Player II wins if the union of his intervals
  $\bigcup_{n<\omega} c_n$ is $J$-positive.

\begin{lemma}
  Player II has a winning strategy in $G(a)$ for any
  $a\notin J$.
\end{lemma}

\begin{proof}
  In case $J$ is an ideal with the Baire property, this
  follows immediately from the Talagrand theorem
  \cite[Theorem 4.1.2]{bartoszynski:set}. Indeed, if
  $\{I_k:k<\omega\}$ is a partition of $a$ into finite sets
  such that each $b\in J$ covers only finitely many of them,
  then the strategy for II is as follows: at round $n$ pick
  $c_n$ covering one of the $I_k$'s.

  Now we prove the lemma in case $J$ is coanalytic.
  Consider a related game, more difficult for Player II. Fix
  a continuous function $f:\baire\to\power(a)$ such that its
  range consists exactly of all $J$-positive sets. The new
  game $G'(a)$ proceeds just as $G(a)$, except Player II is
  required to produce sequences $t_n\in\gwtree$ of length
  and all entries at most $n$, and in the end, Player II
  wins if $y=\bigcup_{n<\omega}t_n\in\baire$ and
  $f(y)\subseteq\bigcup_{n<\omega}c_n$.

  Clearly, the game $G'(a)$ is Borel and therefore
  determined.  If Player II has a winning strategy in
  $G'(a)$, then she has a winning strategy in $G(a)$ and we
  are done.  Thus, we only need to derive a contradiction
  from the assumption that Player I has a winning strategy
  in $G'(a)$.

  Well, suppose $\gs$ is such a winning strategy. We
  construct a strategy for Player I in $G(a)$ as follows.
  The first move $b_0=\sigma(\emptyset)$ does not change.
  Suppose Player I is going to make her move after the sets
  $b_0,c_0,\ldots,b_n,c_n$ have been chosen. For each
  possible choice of the sequences $t_m$ for $m<n$ consider
  a run of $G'(a)$ in which Player I plays according to
  $\sigma$ and Player II plays the pairs $(b_m',t_m)$, where
  $b_m'$ are the intervals $b_m$ adjusted downward to the
  previous move of Player I. The next move of Player I is
  now the union of all finitely many moves the strategy
  $\gs$ dictates against such runs in $G'(a)$. It is not
  difficult to see that this is a winning strategy for
  Player I in the original game $G$. However, Player I
  cannot have a winning strategy in the game $G$ since
  Player II could immediately steal it and win herself.
\end{proof}

Second, consider the collection $F$ of those subsets
$a\subseteq\gwtree$ such that there is no tree $T\in Q(J)$
whose splitnodes all fall into $a$.

\begin{lemma}\label{idealclaim}
  The collection $F$ is an ideal.
\end{lemma}

\begin{proof}
  The collection $F$ is certainly hereditary. To prove the
  closure under unions, let $a=a_0\cup a_1$ be a partition
  of the set of all splitnodes of a $Q(J)$ tree into two
  parts. We must show that one part contains all splitnodes
  of some $Q(J)$ tree. For $i\in 2$ build rank functions
  $\rank_i: a_i\to\ord\cup\{\infty\}$ by setting
  $\rank_i\geq 0$ and $\rank_i(t)\geq\ga+1$ if the set
  $\{n\in\gw:t^\smallfrown n$ has an extension $s$ in $a_i$
  such that $\rank_i(s)\geq\ga\}$ is $J$-positive. If the
  rank $\rank_i$ of any splitnode is $\infty$ then the nodes
  whose rank $\rank_i$ is $\infty$ form a set of splitnodes
  of a tree in $Q(J)$, contained in $a_i$.  Thus, it is
  enough to derive a contradiction from the assumption that
  no node has rank $\infty$.

  Observe that if $t\in a$ is a node with
  $\rank_i(t)<\infty$, then there is $n\in\gw$ such that $a$
  contains nodes extending $t^\smallfrown n$, but all of
  them either have rank less than $\rank_i(t)$ or do not
  belong to $a_i$. Thus, one can build a finite sequence of
  nodes on which the rank decreases and the last one has no
  extension in the set $a_i$. Repeating this procedure
  twice, we will arrive at a node of the set $a$ which
  belongs to neither of the sets $a_0$ or $a_1$, reaching a
  contradiction.
\end{proof}

Now suppose that $J\restriction a\not\geq_K\ssplit$ for every
$J$-positive set $a$. Let $T\in Q(J)$ be a condition and
$\dot y$ be a $Q(J)$-name for a subset of $\omega$. We must
prove that $\dot y$ is not a name for an independent real.
That is, we must find an infinite set $b\subseteq\gw$ as
well as a condition $S\leq T$ forcing $\dot y\restriction
\check b$ to be constant. The construction proceeds in
several steps.

First, construct a tree $T'\subseteq T$ and an infinite set
$b\subseteq\gw$ such that for every splitnode $t\in T'$
there is a bit $c_t\in 2$ such that for all but finitely
many $n\in b$, for all but finitely many immediate
successors $s$ of $t$ in $T'$ we have $$T'\restriction
s\Vdash\dot y(n)=c_t.$$ To do this, enumerate $\gwtree$ as
$\langle t_i:i\in\gw\rangle$, respecting the initial segment
relation, and by induction on $i\in\gw$ construct a
descending sequence of trees $T_i\subseteq T$, sets
$b_i\subseteq\gw$, and bits $c_{t_i}\in 2$ as follows:

\begin{itemize}
\item if $t_i$ is not a splitnode of $T_i$, then do nothing
  and let $T_{i+1}=T_i$, $b_{i+1}=b_i$ and $c_{t_i}=0$;
\item if $t_i$ is a splitnode of $T_i$, then for each $j\in
  \suc_{T_i}(t_i)$ find a tree $S_j\leq T_i\restriction
  t_i^\smallfrown j$ deciding $\dot y\restriction j$, and
  use the Kat\v etov assumption to find a $J$-positive set
  $a\subseteq\suc_{T_i}(t_i)$, a bit $c_{t_i}\in 2$, and an
  infinite set $b_{i+1}\subseteq b_i$ such that whenever
  $j\in a$ and $n\in b_{i+1}\cap j$ then $S_j\Vdash \dot
  y(n)=c_{t_i}$. Let $T_{i+1}=T_i$, except below $t_i$
  replace $T_i\restriction t_i$ with $\bigcup_{j\in a}S_j$.
\end{itemize}

\noindent In the end, let $T'=\bigcap_{i<\omega}T_i$ and let
$b$ be any diagonal intersection of the sets $b_i$.

The second step uses Lemma~\ref{idealclaim} to stabilize the
bit $c_t$. Find a condition $T''\subseteq T'$ such that for
every splitnode $t\in T''$, $c_t$ is the same value, say
$0$.

The last step contains a fusion argument. For every
splitnode $t\in T''$ fix a winning strategy $\gs_t$ for
Player II in the game $G(\suc_{T''}(t))$. By induction on
$i\in\gw$ build sets $S_i\subseteq T''$, functions $f_i$ on
$S_i$, and numbers $n_i\in b$ so that

\begin{itemize}
\item $S_0\subseteq S_1\subseteq\dots$, and in fact
  $S_{i+1}$ contains no initial segments of nodes in $S_i$
  that would not be included in $S_i$ already. The final
  condition will be a tree $S$ whose set of splitnodes is
  $\bigcup_{i<\omega}S_i$;
\item for every node $s\in S_i$, the value $f_i(s)$ is a
  finite run of the game $G(\suc_{T''}(s))$ according to the
  strategy $\gs_s$, in which the union of the moves of the
  second player equals $\{j\in\gw:\exists t\in S_i\
  s^\smallfrown j\subseteq t\}$. Moreover, $f_i(s)\subseteq
  f_{i+1}(s)\subseteq\dots$. This will ensure that every
  node in $\bigcup_{i<\omega}S_i$ in fact splits into
  $J$-positively many immediate successors in the tree $S$;
\item whenever $s\in S_i$ and $j\in\omega$ is the least such
  that $s\in S_j$, then $T''\restriction s \Vdash\forall
  k\in j\ \dot y(n_k)=0$. This will ensure that in the end
  we have $S\Vdash\forall i<\gw\ \ \dot y(n_i)=0$.
\end{itemize}

The induction step is easy to perform. Suppose that $S_i,
f_i$ and $n_j$ have been found for $j<i$. Let $n_i\in b$ be
a number such that for all $s\in S_i$ for all but finitely
many $n\in\suc_{T''}(s)$ we have $$T''\restriction
s^\smallfrown n\Vdash\dot y(n_i)=0.$$ For every node $s\in
S_i$, let $d_s$ be a finite set such that for all
$n\in\suc_{T''}(s)\setminus d_s$ and for all $j\leq i$
\begin{itemize}
\item $T''\restriction s^\smallfrown n\Vdash\dot y(n_j)=0$
\item and $s^\smallfrown n$ is not an initial segment of any
  node in $S_i$.
\end{itemize}
Extend the run $f_i(s)$ to $f_{i+1}(s)$ such that the new
moves by Player II contain no numbers in the set $d_s$.

Put into $S_{i+1}$ all nodes from $S_i$ as well as every $t$
which is the smallest splitnode of $T''$ above some
$s^\smallfrown j$ where $j$ is one of the new numbers in the
set answered by Player II in $f_{i+1}(s)$.
 
In the end put $S=\bigcup_{i<\omega} S_i$. It follows from
the construction that $S\Vdash\forall i<\omega\ \ \dot
y(n_i)=0$, as desired.

\end{proof}

We finish this section with an observation about continuous
degrees of reals in $Q(J)$ generic extensions.

\begin{definition}
  We say that $J$ has the \emph{discrete set property} if
  for every $J$-positive set $a$ and every function $f:a\to
  X$ into a Polish space, there is a $J$-positive set
  $b\subseteq a$ such that the set $f''b$ is discrete.
\end{definition}

\noindent Obviously, the discrete set property is equivalent
to being not Kat\v etov above the family of discrete subsets
of $\Q$. It is not difficult to show that it also equivalent
to being not above the ideal of those subsets of the ordinal
$\gw^\gw$ which do not contain a topological copy of the
ordinal $\gw^\gw$.

\begin{proposition}
  Suppose $J$ has the discrete set property. Then $Q(J)$
  adds one continuous degree.
\end{proposition}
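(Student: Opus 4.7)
The plan is to mimic the standard ``minimal degree'' fusion used for Miller forcing, substituting the discrete set property of $J$ for the extraction of discrete infinite subsequences from infinite sequences. Let $\dot y$ be a $Q(J)$-name for an element of $\baire$ and $T\in Q(J)$. Refining $T$, I first assume continuous reading of names: there is $f\in V$ continuous on $[T]$ with $T\Vdash\dot y=f(\dotxgen)$; the relevant fusion argument transfers verbatim from the Miller case. If some further strengthening of $T$ forces $\dot y\in V$ we are done, so I may assume $T\Vdash\dot y\notin V$. The key observation is then that for every $s\in T$ the image $f([T\restriction s])$ is uncountable: otherwise it would be a countable ground model set $\{z_k:k\in\omega\}$ and $T\restriction s$ would force $\dot y=f(\dotxgen)\in V$. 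This property persists under passage to any $Q(J)$-subcondition of $T$.

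I then run a fusion $T=T_0\supseteq T_1\supseteq\dots$ producing $S=\bigcap_n T_n\in Q(J)$ together with, for each splitnode $t$ of $S$ and each $i\in\suc_S(t)$, a clopen $U_{t,i}\subseteq\baire$ such that (i) the family $\{U_{t,i}:i\in\suc_S(t)\}$ is pairwise disjoint, (ii) $U_{t,i}\subseteq U_{t',i'}$ whenever $t^\smallfrown i\supseteq t'^\smallfrown i'$, (iii) $\diam(U_{t,i})\to 0$ as $|t|\to\infty$, and (iv) $f([S\restriction t^\smallfrown i])\subseteq U_{t,i}$. When processing a splitnode $t$ of $T_n$, I first pick, for each $i\in\suc_{T_n}(t)$, a branch $x_i\in[T_n\restriction t^\smallfrown i]$ in such a way that the values $f(x_i)$ are pairwise distinct; since each set $f([T_n\restriction t^\smallfrown i])$ is uncountable by the observation above, this can be done inductively, each new $x_i$ realising an $f$-value outside the finitely many previous ones. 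The discrete set property applied to $i\mapsto f(x_i)$ now produces a $J$-positive $a\subseteq\suc_{T_n}(t)$ on which the image is discrete in $\baire$, and distinctness then lets me shrink to pairwise disjoint clopens $U_{t,i}$ of diameter below $2^{-|t|}$, one around each $f(x_i)$. By continuity of $f$ I replace each $T_n\restriction t^\smallfrown i$ (for $i\in a$) by a subtree $T_n\restriction y_i$ with $y_i\supseteq t^\smallfrown i$ whose $f$-image lies in $U_{t,i}$, and delete the branches through $\suc_{T_n}(t)\setminus a$.

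Once $S$ is built, the restriction $f\restriction[S]$ is continuous and injective (any two distinct branches of $S$ diverge at some splitnode into distinct successors, which are then sent to disjoint $U_{t,\cdot}$'s) and open onto its image (the disjointness at each splitnode together with the shrinking diameters ensures that for any basic open $[S\restriction s]$ of $[S]$ and any $x\in[S\restriction s]$, a sufficiently fine $U_{t,i}$ along $x$ intersects $f([S])$ inside $f([S\restriction s])$). Hence $f\restriction[S]$ is a homeomorphism from the closed set $[S]\subseteq\baire$ onto the Polish subspace $f([S])\subseteq\baire$, which is therefore $\gdelta$ by Alexandrov's theorem. Since $f\restriction[S]\in V$ sends $\dotxgen$ to $\dot y$, this witnesses that $\dot y$ has the same continuous degree as $\dotxgen$. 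The main technical point is the preselection of branches $x_i$ with distinct $f(x_i)$: the discrete set property alone does not rule out the sampled values collapsing to a single point on a $J$-positive set, and it is precisely the uncountability of $f([T\restriction s])$ for all $s\in T$, granted by the hypothesis $\dot y\notin V$, that lets me sidestep this collapse.
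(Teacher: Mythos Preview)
Your argument is correct and follows the same fusion strategy as the paper: reduce to a continuous $f$, pass to the case where $f$ is nonconstant on every subcondition, and at each splitnode use the discrete set property on sampled branch images to separate successors into disjoint opens, obtaining an $S$ on which $f$ is a topological embedding. Your explicit treatment of the ``distinct values'' issue (via uncountability of $f([T\restriction s])$; infiniteness would already suffice) fills in exactly what the paper leaves to the phrase ``easy fusion argument''.
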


\begin{proof}
  Let $T$ be a condition in $Q(J)$ and
  $f:[T]\rightarrow\baire$ a continuous function. It is
  enough to find a tree $S\in Q(J)$, $S\leq T$ such that on
  $[S]$ the function $f$ is either constant, or is a
  topological embedding. Suppose that $f$ is not constant on
  any such $[S]$. By an easy fusion argument we build
  $S\subseteq T$, $S\in Q(J)$ such that for any splitnode
  $s$ of $S$ there are pairwise disjoint open sets $U_i$ for
  $i\in\suc_S(s)$ such that $f''[S\restriction
  {s^\smallfrown i}]\subseteq U_i$ for each $i\in\suc_S(s)$.
  This implies that $f$ is a topological embedding on $[S]$.
\end{proof}

\section{Closure ideals}\label{sec:closure}

In this section $X$ is a Polish space with a complete
metric, $I$ a $\gs$-ideal on $X$ and $\basis$ a countable
topology basis for the space $X$.

\begin{definition}
  For a set $a\subseteq\basis$, define $$\trace(a)=\{x\in X:
  \forall\eps>0\ \exists O\in a\quad O\subseteq
  B_\eps(x)\},$$ where $B_\eps(x)$ stands for the ball
  centered at $x$ with radius $\eps$. We write
  $$J_I=\{a\subseteq\basis: \trace(a)\in I\}.$$
\end{definition}

\noindent It is immediate that the collection $J_I$ is an
ideal and that $J_I$ is \textit{dense}\footnote{some authors
  prefer the term \textit{tall}}, i.e. every infinite set in
$\basis$ contains an infinite subset in $J_I$. If the
$\gs$-ideal $I$ is \pioneoneonsigmaoneone, then $J_I$ is
coanalytic. On the other hand, if $X$ is compact and $J_I$
is analytic, then it follows from the Kechris Louveau Woodin
theorem \cite[Theorem 11]{klw:compact} that $J_I$ is
$\fsigmadelta$.

\begin{definition}
  An ideal $J$ on a countable set is \emph{weakly selective}
  if for every $J$-positive set $a$, any function on $a$ is
  either constant or 1-1 on a positive subset of $a$.
\end{definition}

\noindent Obviously, this is just a restatement of the fact
that the ideal is not Kat\v etov above the ideal on
$\gw\times\gw$ generated by vertical lines and graphs of
functions.

\begin{proposition}
  $J_I$ is weakly selective.
\end{proposition}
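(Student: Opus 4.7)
The plan is to prove the dichotomy by partitioning $a$ according to the fibers of $f$, handling the two cases separately. Write $a_n = f^{-1}(n)\cap a$. If some $a_n$ is $J_I$-positive, then $f$ is constant on the positive subset $a_n \subseteq a$ and we are done. So assume every $a_n$ lies in $J_I$, i.e.\ $\trace(a_n)\in I$ for all $n$. Since $I$ is a $\sigma$-ideal, the set $Z := \bigcup_n \trace(a_n)$ lies in $I$, so $Y := \trace(a)\setminus Z$ is $I$-positive because $\trace(a)\notin I$.

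The key refinement I would establish is: for every $x\in Y$, every $\eps>0$, and every $N<\gw$, there is $O\in a$ with $O\subseteq B_\eps(x)$ and $f(O)>N$. Indeed, for each $m\leq N$ we have $x\notin\trace(a_m)$, which yields some $\eps_m(x)>0$ such that no element of $a_m$ is contained in $B_{\eps_m(x)}(x)$; shrinking $\eps$ below $\min_{m\leq N}\eps_m(x)$ and invoking $x\in\trace(a)$ produces an $O\in a$ with $O\subseteq B_\eps(x)$, which cannot lie in any $a_m$ with $m\leq N$. Informally: arbitrarily close to every $x\in Y$, the set $a$ contains basic opens of arbitrarily large $f$-value.

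Next I would build a $1$-$1$ selector $b\subseteq a$ greedily. Enumerate $\{V_l\in\basis : V_l\cap Y\neq\emptyset\}$ as $(V_{l_j})_{j<\gw}$. At stage $j$, pick $x\in V_{l_j}\cap Y$ together with $\eps>0$ such that $B_\eps(x)\subseteq V_{l_j}$, and use the refinement with $N=\max\{f(O_k):k<j\}$ to choose $O_j\in a$ with $O_j\subseteq B_\eps(x)\subseteq V_{l_j}$ and $f(O_j)\notin\{f(O_k):k<j\}$. Set $b=\{O_j:j<\gw\}$; then $f\restriction b$ is $1$-$1$ by construction. To verify $Y\subseteq\trace(b)$, given $x\in Y$ and $\eps>0$, the basis property yields some $V_{l_j}$ with $x\in V_{l_j}\subseteq B_\eps(x)$ (note $V_{l_j}\cap Y\ni x$, so $V_{l_j}$ was enumerated), and then $O_j\in b$ satisfies $O_j\subseteq V_{l_j}\subseteq B_\eps(x)$. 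Thus $\trace(b)\supseteq Y\notin I$, so $b\notin J_I$.

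The main obstacle is not the dichotomy itself but ensuring that the $1$-$1$ selector remains $J_I$-positive, which is precisely the role of the refinement step: the fact that no single fiber $\trace(a_n)$ captures any point of $Y$ gives enough freedom to choose fresh $f$-values near every point of $Y$, which is exactly what the greedy selector needs to have its trace cover $Y$.
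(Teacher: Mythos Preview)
Your proof is correct. Both it and the paper's argument follow the same overall scheme: isolate an $I$-positive core $Y\subseteq\trace(a)$, enumerate the basic open sets meeting $Y$, and greedily choose one $O_j\in a$ inside each with a fresh $f$-value, so that $\trace(b)\supseteq Y$. The difference lies in the choice of $Y$ and the freshness argument. The paper takes $Y$ to be $\trace(a)$ with all basic open $U$ satisfying $\trace(a)\cap U\in I$ removed (an ``$I$-perfect kernel'' of $\trace(a)$); then for each enumerated $U_n$ the set $\{O\in a:O\subseteq U_n\}$ is $J_I$-positive, and freshness follows because $f$ must take infinitely many values on any $J_I$-positive set (otherwise one fiber would be positive). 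You instead set $Y=\trace(a)\setminus\bigcup_n\trace(f^{-1}(n)\cap a)$, invoking $\sigma$-additivity of $I$ directly on the fiber traces; freshness then comes from your refinement that near each $x\in Y$ one finds $O\in a$ of arbitrarily large $f$-value. Your route is slightly more direct, avoiding the intermediate pigeonhole step, while the paper's localization to an $I$-perfect part is a device that recurs elsewhere in the paper.
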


\begin{proof}
  Take a $J_I$-positive set $a$ and $f:a\rightarrow\omega$.
  Suppose that $f$ is not constant on any $J_I$-positive
  subset of $a$. We must find $b\subseteq a$ such that $f$
  is $1$-$1$ on $b$. Write $Y$ for $\trace(a)$ shrunk by the
  union of all basic open sets $U$ such that $\trace(a)\cap
  U\in I$. Enumerate all basic open sets which have nonempty
  intersection with $Y$ into a sequence $\langle
  U_n:n<\omega\rangle$. Inductively pick a sequence $\langle
  O_n\in a:n<\omega\rangle$ such that $O_n\subseteq U_n$ and
  $f(O_n)\not=f(O_i)$ for $i<n$. Suppose that $O_i$ are
  chosen for $i<n$. Let $Y_n=Y\cap U_n$. This is an
  $I$-positive set and hence $a_n=\{O\in a: O\subseteq
  U_n\}$ is $J_I$-positive. Note that $f$ assumes infinitely
  many values on $a_n$ since otherwise we could find
  $J_I$-positive $b\subseteq a_n$ on which $f$ is constant.
  Pick any $O_n\in a_n$ such that
  $f(O_n)\not\in\{f(O_i):i<n\}$. Now, the set
  $b=\{O_n:n<\omega\}$ is $J_I$-positive since $\trace(b)$
  contains $Y$.
\end{proof}

Not every ideal on a countable set can be represented as
$J_I$ for a $\sigma$-ideal $I$ on a Polish space. The
existence of such $I$ can be though of as an external
property, which brings some additional setting. It would be
interesting to find out what ``internal'' properties of an
ideal can witness existence of this ``external''
$\sigma$-ideal.

It follows from Hru\v s\'ak's Category Dichotomy
\cite[Theorem 5.20]{hrusak:survey} that if a Borel ideal $J$
is weakly selective, then it is Kat\v etov below the ideal
$\nwd$, and in fact, one can find an identification of
$\dom(J)$ and $2^{<\gw}$ so that the ideal embeds into
$\nwd$ via this identification. The ideal $\nwd$ is of the
form $J_I$ when $I$ is the $\sigma$-ideal of meager sets on
$\cantor$.  Motivated by the result of Hru\v s\'ak we
conjecture the following.

\begin{conjecture}
  If $J$ is a dense $\fsigmadelta$ weakly selective ideal on
  $\omega$, then there exists a Polish space with a
  countable base $\basis$ and a $\sigma$-ideal $I$ on $X$
  such that under some identification of $\omega$ and
  $\basis$ the ideal $J$ becomes $J_I$.
\end{conjecture}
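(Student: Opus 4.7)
The plan is to reduce the conjecture to a concrete construction via Hru\v s\'ak's Category Dichotomy, recalled just before the Conjecture: since $J$ is a Borel weakly selective ideal, one may identify $\dom(J)$ with $2^{<\omega}$ so that $J\subseteq\nwd$. Fix such an identification. The task becomes to exhibit a Polish space $X$, a countable basis $\basis$ of $X$ with a bijection $\basis\cong 2^{<\omega}$, and a $\sigma$-ideal $I$ on $X$ so that for every $a\subseteq\basis$, $\trace(a)\in I$ iff $a\in J$ under the identification.

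The first natural attempt is $X=\cantor$ with its clopen base $\{[t]:t\in 2^{<\omega}\}$ identified with $2^{<\omega}$, and $I$ the $\sigma$-ideal generated by $\{\bar a:a\in J\}$, where $\bar a=\trace(a)$ is the closed set of those $x\in\cantor$ whose initial segments have arbitrarily long extensions in $a$. The inclusion $J\subseteq J_I$ is trivial, while $J_I\subseteq J$ reduces to showing that whenever $\bar a\subseteq\bigcup_n\bar{a_n}$ with all $a_n\in J$, then $a\in J$. A Baire category argument in the closed Polish space $\bar a$ localizes this to the existence of $t\in 2^{<\omega}$ with $\emptyset\neq[t]\cap\bar a\subseteq\bar{a_n}$ for some $n$, and one would then try to use an $\fsigmadelta$ presentation $J=\bigcap_m\bigcup_k F_{m,k}$ to lift this topological containment to an ideal-theoretic statement.

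This first attempt cannot work in general, however, because the map $a\mapsto\bar a$ collapses all $a\subseteq 2^{<\omega}$ that share the same tree-closure, while $J$ may distinguish such $a$ (membership in $J$ depends on the full $\fsigmadelta$ pattern of $a$, not only on its set of infinite branches). The second, more promising attempt is to replace $\cantor$ by a carefully chosen $\gdelta$ subspace $Y\subseteq\cantor$, redefine $\basis:=\{t\in 2^{<\omega}:[t]\cap Y\neq\emptyset\}$ (bijected with $\omega$ through the Hru\v s\'ak identification), and let $I$ be the meager ideal of the Polish space $Y$. Then $a\in J_I$ unfolds as ``$\bar a\cap Y$ is nowhere dense in $Y$'', a condition genuinely finer than $a\in\nwd$ as soon as $Y$ is not comeager in $\cantor$. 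The heart of the construction is a fusion producing $Y$, driven by the fixed $\fsigmadelta$ presentation of $J$, which diagonalizes against every $a\notin J$ by keeping some branch of $\bar a$ densely present in $Y$, while arranging, for every $a\in J$, that $\bar a\cap Y$ be nowhere dense in $Y$.

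The main obstacle, presumably the reason the statement is only a conjecture, is to carry out both diagonalizations coherently with $Y$ remaining $\gdelta$. The two demands pull in opposite directions: for each $a\notin J$ one must inject points of $Y$ into every relatively open piece of $\bar a$, while for each $a\in J$ one must arrange $\bar a\cap Y$ to have empty interior in $Y$; and all of this must be controlled using only the $\fsigmadelta$ codes $F_{m,k}$ of $J$, without an a priori recursive handle on membership in $J$, while keeping the descriptive complexity of $Y$ at $\gdelta$.
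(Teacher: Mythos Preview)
The statement you are addressing is labeled a \emph{Conjecture} in the paper, and the paper offers no proof of it whatsoever; it is presented as an open problem motivated by Hru\v s\'ak's Category Dichotomy. There is therefore no ``paper's own proof'' to compare your proposal against.

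Your proposal is not a proof either, and you are aware of this: you explicitly identify the main obstacle (carrying out the two opposing diagonalizations while keeping $Y$ a $\gdelta$ set, using only the $\fsigmadelta$ presentation of $J$) and note that this is ``presumably the reason the statement is only a conjecture''. So what you have written is a reasonable exploration of candidate constructions together with an honest assessment of where they stall, not a proof attempt with a hidden gap. In particular, your first attempt already fails for the reason you give, and your second attempt is left as a program rather than an argument. If your intention was to resolve the conjecture, the proposal does not do so; if your intention was to record plausible strategies and isolate the difficulty, that is exactly what you have done, and it matches the status the paper assigns to the statement.
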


We will now verify several Kat\v etov properties of the
ideal $J_I$ depending on the forcing properties of $P_I$.

\begin{proposition}
  Suppose that $P_I$ is a proper and $\baire$-bounding
  notion of forcing. Then the ideal $J_I$ has the discrete
  set property.
\end{proposition}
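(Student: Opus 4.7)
The plan is to extract the required $b$ from a careful analysis of how the $P_I$-generic point is approximated by elements of $a$. First, invoking the weak selectivity of $J_I$ just established, reduce to the case where $f$ is $1$-$1$ on $a$; the constant case is trivial. Now work in $V^{P_I}$ below the condition $\trace(a)$ with generic point $\dot x$, so $\dot x \in \trace(a)$. Fixing an enumeration of $a$ in $V$, define a name $\dot\gf: \gw \to a$ by letting $\dot\gf(n)$ be the least $O \in a$ contained in $B_{2^{-n}}(\dot x)$.

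By $\baire$-bounding and properness, obtain a condition $C \leq \trace(a)$ and a function $h: \gw \to [a]^{<\gw}$ in $V$ such that $C \Vdash \dot\gf(n) \in h(n)$ for every $n$. Setting $b_0 := \bigcup_n h(n) \subseteq a$, the condition $C$ forces $\dot x \in \trace(b_0)$, so $\trace(b_0) \supseteq C$ modulo $I$ and hence $b_0$ is $J_I$-positive. A short computation from the definition of $\trace$ establishes its finite additivity: $\trace(c \cup c') = \trace(c) \cup \trace(c')$ for any $c, c' \subseteq \basis$, since for $x \in \trace(c \cup c')$, at least one of the two families must supply arbitrarily small witnesses $O \subseteq B_\eps(x)$.

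The remaining task is to find $b \subseteq b_0$ with $\trace(b)$ still $I$-positive and $f''b$ discrete. Analyze the countable set $f''b_0$ inside its closure $\overline{f''b_0} \subseteq Y$ via the Cantor--Bendixson derivative. If the rank is finite, then $f''b_0$ decomposes as a finite disjoint union of discrete layers $L_0, \dots, L_{k-1}$, and finite additivity of $\trace$ forces some preimage $f^{-1}(L_i) \cap b_0$ to be $J_I$-positive, providing the required $b$. The main obstacle is the case of infinite CB rank: here apply $\baire$-bounding a second time to the name $n \mapsto \mathrm{rk}(f(\dot\gf(n)))$ (after a suitable coding of countable ordinals by natural numbers), dominating it by some ground-model $g: \gw \to \gw$, and refine $h$ to $h'(n) := h(n) \cap f^{-1}\bigl(\{y : \mathrm{rk}(y) \leq g(n)\}\bigr)$. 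On a sufficiently strong subcondition this reduces the problem to a finite-rank analysis layer-by-layer; the technical heart of the proof is verifying that $J_I$-positivity of the refined set $\bigcup_n h'(n)$ survives this second application of $\baire$-bounding.
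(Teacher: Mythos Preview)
Your outline has a real gap at the Cantor--Bendixson step. Nothing prevents $f''b_0$ from being dense-in-itself: $b_0$ is just an arbitrary countable subset of $a$ with $I$-positive trace, and its image under $f$ could perfectly well be a copy of $\mathbb{Q}$. In that case the first layer $L_0$ of isolated points is empty and the CB derivative never shrinks the set, so there are no discrete layers to appeal to. More generally, whenever $\overline{f''b_0}$ has a nonempty perfect kernel, any point of $f''b_0$ lying in that kernel has no CB rank, and the name $n\mapsto\mathrm{rk}(f(\dot\gf(n)))$ is undefined on a nontrivial part of the condition. Even where ranks are defined, dominating the coded rank by a ground-model $g$ does not reduce to finitely many layers: $g$ is unbounded, so $\bigcup_n h'(n)$ still meets unboundedly many layers and you are back where you started. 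The sentence ``this reduces the problem to a finite-rank analysis layer-by-layer'' is not justified, and the ``technical heart'' you flag is not a detail to be checked but the entire difficulty.

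The paper's argument avoids this by never separating the bounding step from the discreteness requirement. It works with the sequence $\langle f(\dot O_n):n\in\gw\rangle$ in the extension and splits into two cases. If some condition forces this sequence to accumulate at a point of $f''a$, then after shrinking one forces it to be convergent and not eventually constant; its range is then discrete, and a single bounding exactly like your first one produces the desired $b$. In the complementary case, every $y\in f''a$ has a ball that contains only finitely many terms $f(\dot O_n)$; this extra information is fed into the bounding step itself, so that the finite sets $a_n$ capturing the $\dot O_m$ come together with radii $\eps_n$ making all the balls $B_{\eps_n}(f(O))$, $O\in a_n$, pairwise disjoint across all $n$. Discreteness of $f''\bigl(\bigcup_n a_n\bigr)$ is then immediate. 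The key idea you are missing is that discreteness must be built into the bounding, not extracted afterward from the static set $f''b_0$.
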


\begin{proof}
  Take a $J_I$-positive set $a$ and a function $f:a\to\Q$.
  Let $B=\trace(a)$. Let $\langle\dot O_n:n\in\gw\rangle$ be
  a sequence of $P_I$-names for open sets in $a$ such that
  $\dot O_n$ is forced to be wholly contained in the
  $2^{-n}$-neighborhood of the $P_I$-generic point in $B$.
  Passing to a subsequence and a subset of $a$ if necessary,
  we may assume that the sets $\dot O_n$ are pairwise
  distinct

  \noindent\textbf{Case 1}. Assume the values $\{f(\dot
  O_n):n\in\gw\}$ are forced not to have any point in the
  range of $f$ as a limit point. Use the $\baire$-bounding
  property of the forcing $P_I$ to find a condition
  $B'\subseteq B$, a sequence of finite sets $\langle
  a_n:n\in\gw\rangle$ and numbers $\eps_n>0$ such that

  \begin{itemize}
  \item $B'\Vdash\forall m<\omega\,\exists n<\omega\ \dot
    O_m\in\check a_n$;
  \item the collection $\{B_{\eps_n}(f(O)):O\in a_n,
    n\in\gw\}$ consists of pairwise disjoint open balls.
  \end{itemize}

  To see how this is possible, note that $B$ forces that for
  every point $y\in f''a$ there is an $\eps>0$ such that all
  but finitely many points of the sequence $\langle
  f(O_m):m\in\gw\rangle$ have distance greater than $\eps$
  from $y$.

  Now let $b=\bigcup_{n<\omega}a_n$.  Let $M$ be a countable
  elementary submodel of a large enough structure and let
  $B''\subseteq B'$ be a Borel $I$-positive set consisting
  only of generic points over $M$.  It is not difficult to
  observe that $B\subseteq\trace(b)$ and therefore the set
  $b$ is as required.

  \noindent\textbf{Case 2}. If the values $\{f(\dot
  O_n):n\in\gw\}$ can be forced to have a point in the range
  of $f$ as a limit point, then, possibly shrinking the set
  $a$ we can force the sequence $\langle f(\dot
  O_n):n\in\gw\rangle$ to be convergent and not eventually
  constant, hence discrete. Similarly as in Case 1, we find
  $b\subseteq a$ such that $f''b$ is discrete.
\end{proof}

\begin{proposition}
  Suppose that $P_I$ is a proper and outer Lebesgue measure
  preserving notion of forcing. Then $J_I$ has the Fubini
  property.
\end{proposition}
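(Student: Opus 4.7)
The plan is to prove the contrapositive: assuming $J_I$ fails to have the Fubini property, I will construct in any $P_I$-generic extension below a suitable condition a Borel set $\dot U \subseteq \cantor$ of measure at most $\eps$ containing every ground model real, which contradicts the preservation of outer Lebesgue measure. Since $J_I$ is an ideal on the countable basis $\basis$, Solecki's theorem (as stated just before Theorem~\ref{fubini}) will give an $\eps > 0$, a $J_I$-positive set $b \subseteq \basis$, and a function $f : b \to \dom(S_\eps)$ witnessing $J_I \restriction b \geq_K S_\eps$. The consequence I need is that for each $y \in \cantor$, the collection $\{C \in \dom(S_\eps) : y \notin C\}$ is a generator of $S_\eps$ (since its union misses $y$), so its $f$-preimage $\{O \in b : y \notin f(O)\}$ lies in $J_I$; equivalently the Borel set $T_y = \trace(\{O \in b : y \notin f(O)\})$ belongs to $I$.

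Next, I work below the condition $B = \trace(b) \in P_I$, which is $G_\delta$ and $I$-positive because $b$ is $J_I$-positive. Let $\dot x$ name the $P_I$-generic point. In the extension, define $b_n = \{O \in b : O \subseteq B_{2^{-n}}(\dot x)\}$; each $b_n$ is nonempty because $\dot x \in \trace(b)$, and the sequence is decreasing in $n$. Setting $\dot U_n = \bigcap_{O \in b_n} f(O)$, each $\dot U_n$ is contained in any single $f(O)$ with $O \in b_n$, hence is a Borel subset of $\cantor$ of measure less than $\eps$. The crucial monotonicity observation is that $b_{n+1} \subseteq b_n$ forces $\dot U_n \subseteq \dot U_{n+1}$, so $\dot U = \bigcup_n \dot U_n$ still has measure at most $\eps$.

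To finish, fix any $y \in V \cap \cantor$. The $P_I$-generic point avoids every ground-model Borel $I$-small set (the set $B \setminus T_y$ is a dense $I$-positive Borel refinement of $B$), so in particular $\dot x \notin T_y$. Unpacking the definition of $\trace$, this yields some $n$ such that every $O \in b$ with $O \subseteq B_{2^{-n}}(\dot x)$ satisfies $y \in f(O)$, that is $y \in \bigcap_{O \in b_n} f(O) = \dot U_n \subseteq \dot U$. Hence $V \cap \cantor \subseteq \dot U$ in $V[G]$, yet $\mu(\dot U) \leq \eps < 1$, contradicting preservation of outer Lebesgue measure. The only subtle point — and the one that would sink a naive attempt — is the monotonicity $b_{n+1} \subseteq b_n$ which makes the $\dot U_n$'s stack and keeps $\mu(\dot U)$ bounded by a single $\eps$; without it, the union of the fiber measures could blow up and the bound would be lost.
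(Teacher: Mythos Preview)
Your argument is correct, but it takes a different route from the paper's.  The paper argues directly from an arbitrary witness $D\subseteq a\times\cantor$ to the failure of Fubini (so $\mu^*(\int_a D\ dJ_I)>\eps$): below $B=\trace(a)$ one picks names $\dot O_n\in a$ shrinking to the generic point, observes that $\dot C=\{z:\exists^\infty n\ \langle\dot O_n,z\rangle\notin D\}$ has measure at least $1-\eps$, and uses outer-measure preservation to force some $z\in\int_a D\ dJ_I$ into $\dot C$; then $b=\{O\in a:\langle O,z\rangle\notin D\}$ lies in $J_I$ by choice of $z$, yet $\trace(b)$ is $I$-positive because the generic point is forced into it --- a contradiction.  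No appeal to Solecki's dichotomy is needed.  Your detour through Solecki's theorem and the Kat\v etov reduction $J_I\restriction b\geq_K S_\eps$ is very much in the spirit of Section~\ref{sec:trees} (compare the failure direction in the proof of Theorem~\ref{fubini}), and your construction of the increasing union $\dot U=\bigcup_n\dot U_n$ of sets of measure $<\eps$ swallowing every ground-model real is a pleasant dual to the paper's ``large set plus one bad point'' picture.  The paper's version is more self-contained; yours makes the connection with the Kat\v etov framework explicit.  One small slip: $\trace(b)$ is in fact closed, not just $G_\delta$, though this changes nothing.
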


\begin{proof}
  Suppose that $\eps>0$ is a real number, $a\subseteq\basis$
  is a $J_I$-positive set, and $D\subseteq a\times\cantor$
  is a Borel set with vertical sections of measure at most
  $\eps$.  Assume for contradiction that the outer measure
  of the set $\int_aD\ dJ$ is greater than $\eps$. Let $B=
  \trace(a)$.  This condition forces that there is a
  sequence $\langle\dot O_n:n\in\gw\rangle$ of sets in $a$
  such that $O_n$ is wholly contained in in the
  $2^{-n}$-neighborhood of the generic point. Let $\dot C$
  be a name for the set $\{z\in\cantor:\exists^\infty
  n<\omega\ y\notin\dot O_n\}$.  This is a Borel set of
  measure greater than or equal to $1-\eps$. Since the
  forcing $P_I$ preserves the outer Lebesgue measure, there
  must be a condition $B'\subseteq B$ and a point $z\in
  \int_aD dJ$ such that $B'\Vdash\check z\in\dot C$.
  Consider the set $b=\{O\in a:z\notin O\}$.  The set
  $\trace(b)$ must be $I$-positive, since the condition $B'$
  forces the generic point to belong to it.  This, however,
  contradicts the assumption that $z\in\int_aD\ dJ$.
\end{proof}

Finally we examine the property of adding Cohen reals.

\begin{proposition}
  Suppose that $P_I$ is proper and does not add Cohen reals.
  Then $Q(J_I)$ does not add Cohen reals.
\end{proposition}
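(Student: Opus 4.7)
My plan is to prove the contrapositive via Theorem \ref{nocohen}: it suffices to show that $J_I\restriction a\not\geq_K\nwd$ for every $J_I$-positive $a\subseteq\basis$. So I assume for contradiction that $f\colon a\to\bintree$ witnesses $J_I\restriction a\geq_K\nwd$ for some $J_I$-positive $a$, and I will exhibit a Cohen real over $V$ in the $P_I$-extension below $\trace(a)\in P_I$, contradicting the hypothesis that $P_I$ adds no Cohen reals. Passing to a $J_I$-positive subset of $a$ (noting $\{\emptyset\}\in\nwd$, so $f^{-1}(\{\emptyset\})\in J_I$), we may assume $f(O)\neq\emptyset$ for every $O\in a$.

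The key Kat\v etov computation, which I would verify directly, is the following. For a closed nowhere dense $D\subseteq\cantor$, let $b_D:=\{s\in\bintree:[s]\cap D\neq\emptyset\}$; since $D$ is nowhere dense, $b_D\in\nwd$ in the sense of Section \ref{sec:trees}. Moreover, for any $s\in\bintree$, the shifted family $b_D^{s}:=\{t\in\bintree:s^\smallfrown t\in b_D\}$ is also in $\nwd$, since any witness $v'\supseteq s^\smallfrown u$ of the nwd property of $b_D$ yields a witness $v\supseteq u$ for $b_D^s$ after stripping the prefix $s$. By the Kat\v etov assumption, $f^{-1}(b_D^s)\in J_I$, whence $\trace(f^{-1}(b_D^s))\in I$ for every such $s$ and every $V$-coded closed nowhere dense $D$.

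The construction of the Cohen real proceeds by recursion inside a countable elementary submodel $M\prec H_\kappa$ containing $I,J_I,f,a,\basis$. Enumerate the $V$-closed nowhere dense subsets of $\cantor$ whose Borel codes lie in $M$ as $\langle D_k:k<\omega\rangle$. In the $P_I$-extension, I inductively build finite binary sequences $\emptyset=\vec c_0\subsetneq\vec c_1\subsetneq\cdots$ in $\bintree$. Given $\vec c_k$, apply the computation above with $s=\vec c_k$ and $D=D_k$ to get $\trace(f^{-1}(b_{D_k}^{\vec c_k}))\in I$; as the generic point $\dot x$ avoids this $V$-Borel $I$-set, I choose $\dot O_k\in a\setminus f^{-1}(b_{D_k}^{\vec c_k})$ with $\dot O_k\subseteq B_{2^{-k}}(\dot x)$, and set $\vec c_{k+1}:=\vec c_k{}^\smallfrown f(\dot O_k)$. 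Then $\vec c_{k+1}\notin b_{D_k}$, so $[\vec c_{k+1}]\cap D_k=\emptyset$, and the limit $\dot c:=\bigcup_k\vec c_k\in\cantor$ avoids every $M$-coded closed nowhere dense set.

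The main obstacle I anticipate is the bootstrap from ``$\dot c$ avoids every $M$-coded closed nowhere dense set'' to the genuine statement ``$\dot c$ is Cohen over $V$,'' which requires avoiding every $V$-closed nowhere dense set, not merely those coded in $M$. My plan is to make the choice of $\dot O_k$ canonical---say, the lexicographically first suitable element of $a$ in a fixed $V$-enumeration of $\basis$---so that $\dot c$ is realized by a single Borel function $F\colon\trace(a)\to\cantor$ in $V$. Then for any $V$-closed nowhere dense $D$, absorbing $D$ into a larger countable elementary submodel $M'\supseteq M$ and rerunning the recursion should yield the same $\dot c=F(\dot x)$ and the avoidance of $D$. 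Verifying this canonicity and absoluteness precisely---that the $M$-construction does produce a $V$-Borel function whose preimages of all closed nowhere dense sets lie in $I$---is the delicate technical step on which the proof hinges.
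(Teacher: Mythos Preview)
Your contrapositive strategy via Theorem~\ref{nocohen} is natural, and the recursion you describe does produce, for each fixed countable $M$, a Borel function $F_M\colon\trace(a)\to\cantor$ such that $F_M(\dot x)$ avoids every $M$-coded closed nowhere dense set. The gap is exactly where you flag it, and your proposed resolution does not work: the function $F_M$ is built from the specific enumeration $\langle D_k:k<\omega\rangle$ of $M$-coded nowhere dense sets, so passing to a larger $M'\ni D$ changes the enumeration, changes each choice of $\dot O_k$, and yields a genuinely different function $F_{M'}$. There is no reason $F_M=F_{M'}$, and no elementarity argument repairs this, since the enumeration of $M$-coded sets is not itself an element of $M$. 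What you have shown is only that $P_I$ adds a real which is Cohen over $M$; this is much weaker than adding a Cohen real over $V$ (any forcing that adds a new real does the former). To salvage the approach you would need a single $V$-Borel $F$ with $F^{-1}(D)\in I$ for \emph{every} closed nowhere dense $D$, and nothing in your construction provides this.

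The paper proceeds differently and does not attempt a direct proof of the Kat\v etov inequality. Instead it isolates a local consequence of ``$P_I$ adds no Cohen reals'' (Lemma~\ref{epsilonlemma}): for any $J_I$-positive $a$ and any $f\colon a\to\cantor$, one can thin $a$ to a $J_I$-positive $b$ whose $f$-image clusters around a single closed nowhere dense set $N$, uniformly in $\eps$. This lemma is proved by letting the limit of $f(\dot O_n)$ along a sequence of basic open sets shrinking to the $P_I$-generic point play the role of a would-be Cohen real and invoking the hypothesis to trap it in some nowhere dense $N$. With this lemma in hand, the paper runs a direct fusion on $Q(J_I)$ trees to build a condition and a dense open set missed by the candidate Cohen name; the Kat\v etov statement $J_I\restriction a\not\geq_K\nwd$ is then obtained only \emph{a posteriori} as a corollary via Theorem~\ref{nocohen}.
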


\begin{proof} We begin with a lemma.
  
\begin{lemma}\label{epsilonlemma}
  Let $a\subseteq\basis$ be $J_I$-positive and
  $f:a\rightarrow \cantor$ be a function. There is a
  $J_I$-positive set $b\subseteq a$ and a closed nowhere
  dense set $N\subseteq\cantor$ such that for each $\eps>0$
  if $b_\eps=\{O\in b: f(O)\in B_\eps(N)\}$, then
  $\trace(b_\eps)=\trace(b)$.
\end{lemma}

\begin{proof}
  Write $C=\trace(a)$. We pick a sequence of names $\langle
  \dot O_n:n<\omega\rangle$ for elements of $a$ such that
  $C$ forces that
  \begin{itemize}
  \item $\dot O_n$ is contained in the
    $2^{-n}$-neighborhood of the generic point,
  \item the sequence $\langle f(\dot O_n):n<\omega\rangle$
    is convergent in $\cantor$.
  \end{itemize}
  Let $\dot z$ be a name for $\lim_{n\rightarrow\infty}
  f(\dot O_n)$. Since $P_I$ does not add Cohen reals, there
  is a closed nowhere dense set $N\subseteq\cantor$ and
  Borel $I$-positive $B\subseteq C$ such that $B\Vdash\dot
  z\notin \check N$. Without loss of generality assume that
  $B$ consists only of generic points over a sufficiently
  big countable elementary submodel $M\prec H_\kappa$. Let
  $b$ be the set of all $O\in a$ such that $O$ is $\dot O_n$
  evaluated in $M[g]$ for some $n<\omega$ and some $g\in B$.
  Now if $\eps>0$, then clearly $\trace(b_\eps)=B$ since all
  but finitely many $f(\dot O_n)$ are forced into
  $B_\eps(N)$.
\end{proof}

Suppose $T\in Q(J_I)$ is a condition and $\dot x$ is a name
for a real. Without loss of generality assume that $T\Vdash
\dot x=f(\dot g)$ for some continuous function
$f:[T]\rightarrow\cantor$. For each $t\in\spl(T)$ and each
$O\in\suc_T(t)$ pick a branch $b_{t,O}\in[T]$ extending
$t^\smallfrown O$. By Lemma \ref{epsilonlemma} we can assume
that for each splitnode $t\in T$ we have $\suc_T(t)\not\in
J_I$ and there is a closed nowhere dense set
$N_t\subseteq\cantor$ such that for each $\eps>0$ we have
$$\trace(\{O\in\suc_T(t): f(b_{t,O})\in
B_\eps(N_t)\})=\trace(\suc_T(t)).$$ For each $t\in\spl(T)$
fix an enumeration $\langle V_t^n:n<\gw\rangle$ of all basic
open sets which have nonempty intersection with
$\cl(\suc_T(t))$. Enumerate all nonempty basic open subsets
of $\cantor$ into a sequence $\langle U_n:n<\gw\rangle$.

By induction on $n<\gw$, we build increasing finite sets
$S_n\subseteq\spl(T)$, decreasing trees $T_n\leq T$ and
nonempty clopen sets $U_n'\subseteq U_n$ such that for each
$n<\gw$ the following hold:
\begin{itemize}
\item $S_n\subseteq T_n$,
\item for each $t\in\spl(T_n)$ we have
  $\trace(\suc_{T_n}(T))=\trace(\suc_T(t))$,
\item for each $s\in S_n$ there is $t\supseteq s$ such that
  $t\in S_{n+1}$ and $t(|s|)\subseteq V_s^n$,
\item for each $s\in S_n$ and $O\in\suc_{T_n}(s)$ we have
  $b_{s,O}\in[T_n]$,
\item for each $s\in S_n$ for each $x\in[T_n\restriction s]$
  we have $f(x)\not\in\bigcup_{k<n}U_k'$.
\end{itemize}

We set $S_0=\emptyset$ and $T_0=T$. Suppose $S_n$ and $T_n$
are constructed. For each $s\in S_n$ find $\eps_s>0$ such
that
$$W=U_{n+1}\setminus\bigcup_{s\in
  S_n}B_{\eps_s}(N_s)\not=\emptyset$$ and
$O_s\in\suc_{T_n}(s)$ such that $O_s$ is contained in
$V_s^n$. Next, find $a_s\subseteq\suc_{T_n}(s)$ such that
$\trace(a_s)=\trace(\suc_{T_n}(s))$ and $f(b_{O,s})\in
B_{\eps_s}(N_s)$ for each $O\in a_s$. For each $s\in S_n$
and $O\in a_s\setminus\{O_s\}$ find $k_{s,O}<\gw$ such that
$$f''[T_n\restriction(b_{s,O}\restriction
k_{s,O})]\subseteq B_{\eps_s}(N_s).$$ Find
$U_{n+1}'\subseteq W$ which does not contain any of the
finitely many points $\{f(b_{s,O_s}): s\in S_n\}$. For each
$s\in S_n$ find $k_{s,O_s}<\gw$ such that
$$f''[T_n\restriction(b_{s,O_s}\restriction k_{s,O_s})]\cap
U_{n+1}'=\emptyset.$$

To obtain the tree $T_{n+1}$, extend each $s\in S_{n+1}$ by
$T_n\restriction(b_{s,O}\restriction k_{s,O})$ above each
$s^\smallfrown O$ for each $O\in\{O_s\}\cup a_s$. Put into
$S_{n+1}$ all nodes from $S_n$ as well as the first
splitnodes of $T_{n+1}$ above each $s^\smallfrown O_s$ for
$s\in S_n$. This ends the construction.

Now the set $U=\bigcup_{n<\gw}U_n'$ is dense open and
$T=\bigcap_{n<\gw}T_n$ is a condition in $Q(J_I)$ with the
set of splitnodes $\bigcup_{n<\gw} S_n$. By the construction
we have that $T\Vdash\dot x\notin U$, which implies that
$\dot x$ is not a name for a Cohen real. This ends the
proof.

\end{proof}

\begin{corollary}
  If $I$ is such that $P_I$ is proper and does not add Cohen
  reals, then $J_I\restriction a\not\geq_K\nwd$ for any
  $J_I$-positive set $a$.
\end{corollary}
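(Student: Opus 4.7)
The plan is to chain together the two results immediately preceding the statement. First, I would invoke the previous proposition: since $P_I$ is proper and does not add Cohen reals, the forcing $Q(J_I)$ does not add Cohen reals either. This transfers the non-Cohen property from the original idealized forcing $P_I$ to the combinatorial tree forcing associated with the ideal $J_I$ on the countable base $\basis$.

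Once that is in hand, I would apply Theorem \ref{nocohen} to the ideal $J=J_I$. That theorem characterizes the property ``$Q(J)$ does not add Cohen reals'' as being precisely equivalent to $J\restriction a\not\geq_K \nwd$ for every $J$-positive set $a$. Reading this equivalence in the right-to-left direction (more precisely, taking its contrapositive), the conclusion $Q(J_I)$ does not add Cohen reals forces $J_I\restriction a\not\geq_K \nwd$ for every $J_I$-positive set $a$, which is exactly what is claimed.

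There is essentially no obstacle here beyond checking that the hypotheses of the two cited results apply. The only thing worth verifying is that $J_I$ is a hereditary family (in fact an ideal, as noted right after its definition in Section~\ref{sec:closure}) with $\basis \notin J_I$, so that $Q(J_I)$ is a legitimate instance of the poset $Q(J)$ of Section~\ref{sec:trees}. The case when $\basis\in J_I$ is trivial, since then $X\in I$ and the statement holds vacuously (there are no $J_I$-positive sets). In the nontrivial case, both invocations go through directly and the corollary follows.
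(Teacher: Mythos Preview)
Your proposal is correct and is exactly the intended argument: the paper states this as an immediate corollary with no separate proof, and the chain ``preceding proposition $\Rightarrow$ $Q(J_I)$ adds no Cohen reals $\Rightarrow$ Theorem~\ref{nocohen} gives the Kat\v etov conclusion'' is precisely what is meant. One small wording slip: you invoke the \emph{left-to-right} (forward) direction of Theorem~\ref{nocohen}, not the right-to-left one.
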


\section{Tree representation}\label{sec:representation}

In this section we show that under suitable assumptions the
forcing $P_{I^*}$ is equivalent to the tree forcing
$Q(J_I)$.

\begin{definition}
  Let $J$ be an ideal on $\basis$ and $T\in Q(J)$. We say
  that $T$ is \textit{Luzin} if the sets on the $n$-th level
  have diameter less than $2^{-n}$ and for each $t\in T$ the
  immediate successors of $t$ in $T$ are pairwise disjoint.
  If $T$ is Luzin, then we write $\pi[T]$ for
  $\{\bigcap_{n<\omega} x(n):x\in [T]\}$.
\end{definition}

\begin{proposition}\label{positive}
  Let $I$ be a $\gs$-ideal on a Polish space $X$. If $T\in
  Q(J_I)$ is Luzin, then $\pi(T)\in P_{I^*}$.
\end{proposition}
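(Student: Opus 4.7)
The plan is to establish two things about $\pi[T]$: that it is Borel, and that it cannot be covered by countably many closed sets from $I$. For the first, I would observe that the map $\pi\colon[T]\to X$ defined by $x\mapsto\bigcap_n x(n)$ is continuous (by the shrinking diameters) and injective (by the pairwise disjointness of immediate successors in $T$), so that $\pi[T]$ is Borel by the Lusin--Souslin theorem. For the second, I would argue by contradiction: assuming $\pi[T]\subseteq\bigcup_n F_n$ with each $F_n$ a closed $I$-set, I would construct a branch $x\in[T]$ whose image $\pi(x)$ avoids every $F_n$.

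The workhorse for the construction is the following observation extracted from the definition of $J_I$: if $s\in T$ satisfies $\suc_T(s)\notin J_I$ and $F\in I$ is closed, then some $O\in\suc_T(s)$ has $\cl(O)\cap F=\emptyset$. Indeed, since $\trace(\suc_T(s))\notin I$, one can choose $y\in\trace(\suc_T(s))\setminus F$; because $F$ is closed, $d(y,F)=\delta>0$, and the defining property of the trace yields $O\in\suc_T(s)$ contained in $B_{\delta/2}(y)$, whose closure is therefore disjoint from $F$.

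With this in hand, I would carry out a direct fusion along an enumeration of the $F_n$'s. Starting from some $t_0\in T$ with $\suc_T(t_0)\notin J_I$ (which exists because $T\in Q(J_I)$), I would inductively pick $t_0\subsetneq t_1\subsetneq\cdots$ in $T$ as follows: at stage $n$ the observation supplies $O_n\in\suc_T(t_n)$ with $\cl(O_n)\cap F_n=\emptyset$, after which I extend $t_n{}^\smallfrown O_n$ inside $T$ to the next node $t_{n+1}$ with $\suc_T(t_{n+1})\notin J_I$, again using $T\in Q(J_I)$. Setting $x=\bigcup_n t_n\in[T]$, the Luzin property guarantees that $\pi(x)$ is a well-defined point contained in each $x(k)$, in particular in $O_n=x(|t_n|)$ for each $n$, so $\pi(x)\notin F_n$ for any $n$, contradicting $\pi(x)\in\pi[T]\subseteq\bigcup_n F_n$.

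The only real obstacle is the mild bookkeeping in the fusion and ensuring that the Luzin nesting forces $\pi(x)\in O_n$ along the resulting branch; once that is in place, the heart of the argument---that the trace of a $J_I$-positive set must meet the complement of any closed $I$-set---is immediate from the definitions.
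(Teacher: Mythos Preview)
Your argument is correct. The Borel part matches the paper verbatim, and your key observation---that for any splitnode $s$ with $\suc_T(s)\notin J_I$ and any closed $F\in I$ one can find $O\in\suc_T(s)$ disjoint from $F$---is exactly the contrapositive of the implication the paper uses, namely that if every $O\in\suc_T(s)$ meets a closed set $E$, then $\trace(\suc_T(s))\subseteq E$.

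Where you diverge is in how this observation is deployed. The paper does not build a branch. Instead it pulls the covering $\pi[T]\subseteq\bigcup_n E_n$ back through $\varphi$ to obtain a closed cover of the Polish space $[T]$, invokes the Baire category theorem to find a single $n$ and a node $t$ with $[T\restriction t]\subseteq\varphi^{-1}(E_n)$, and then (after extending $t$ to a node with $J_I$-positive successor set) reads off $\trace(\suc_T(t))\subseteq E_n$, contradicting $E_n\in I$. Your route is a hands-on diagonalization that in effect reproves the relevant instance of Baire category inside $[T]$: you avoid $F_n$ at stage $n$ and take the resulting branch. The paper's version is shorter and dispatches the whole contradiction with one closed set; yours is more explicit and avoids the black-box appeal to Baire category, at the cost of the extra bookkeeping you mention (tracking which coordinate of the branch equals $O_n$, and why $\pi(x)\in O_n$ follows from the Luzin nesting).
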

\begin{proof}
  The set $\pi[T]$ is a 1-1 continuous image of $[T]$, which
  is a Polish space, hence $\pi[T]$ is Borel. To see that
  $\pi[T]$ is $I^*$-positive consider the function
  $\varphi:[T]\rightarrow X$ which assings to any $x\in[T]$
  the single point in $\bigcap_{n<\omega} x(n)$. Note that
  $\varphi$ is continuous since the diameters of open sets
  on $T$ vanish to $0$. Now if
  $\pi[T]\subseteq\bigcup_{n<\omega} E_n$ where each $E_n$
  is closed and belongs to $I$, then $\varphi^{-1}(E_n)$ are
  closed sets covering the space $[T]$. By the Baire
  category theorem, one of them must have nonempty interior.
  So there is $n<\omega$ and $t\in T$ such that every
  immediate successor of $t$ in $T$ belongs to
  $\varphi^{-1}(E_n)$.  Now for each $u\in \suc_T(t)$ we
  have $u\cap E_n\not=\emptyset$, which implies that
  $\trace(\suc_T(t))\subseteq E_n$ and contradicts the fact
  that $\trace(\suc_T(t))$ is $I$-positive.
\end{proof}

The following proposition, combined with the propositions
proved in the previous section, gives Theorems
\ref{onedegree}, \ref{nocohentheorem},
\ref{noindependenttheorem} and \ref{measuretheorem} from the
introduction (recall that the Cohen forcing adds an
independent real and does not preserve outer Lebesgue
measure).

\begin{proposition}\label{representation}
  Suppose $I$ is a $\gs$-ideal on a Polish space $X$ such
  that the poset $P_I$ is proper and is not equivalent to
  the Cohen forcing under any condition. For any $B\in
  P_{I^*}$
  \begin{itemize}
  \item either $I^*$ and $I$ contain the same Borel sets
    below $B$,
  \item or there is $C\in P_{I^*}$ below $B$ such that below
    $C$ the forcing $P_{I^*}$ is equivalent to $Q(J_I)$.
  \end{itemize} 
\end{proposition}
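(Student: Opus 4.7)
The plan is to prove the dichotomy by constructing, when the first alternative fails, a Luzin tree $T_0\in Q(J_I)$ whose image $C=\pi[T_0]$ is an $I^*$-positive Borel subset of $B$ together with a verification that the Luzin subtrees of $T_0$ map, via $\pi$, onto a dense subset of $P_{I^*}\restriction C$. Combined with the observation that Luzin trees are themselves dense in $Q(J_I)$, this will yield the desired forcing equivalence $P_{I^*}\restriction C\equiv Q(J_I)$.

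\emph{Reduction.} Assume the first alternative fails and pick a Borel $B_0\subseteq B$ which is $I$-null but $I^*$-positive. The closure $\overline{B_0}$ must be $I$-positive, for otherwise it would be a closed set in $I$ containing $B_0$ and witnessing $B_0\in I^*$, a contradiction. Removing from $B_0$ the $I^*$-null set $\bigcup\{U\cap B_0:U\in\basis,\ U\cap B_0\in I^*\}$, I may assume in addition that for every $U\in\basis$, the intersection $U\cap B_0$ is either empty or $I^*$-positive (and hence $\overline{U\cap B_0}$ is $I$-positive by the same argument).

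\emph{Construction of $T_0$.} I build $T_0$ by fusion so that every node's last basic-open coordinate meets $B_0$. At a node $t=(U_0,\ldots,U_n)$ with $U_n\cap B_0\neq\emptyset$, the family $a_t=\{V\in\basis:V\subseteq U_n,\ V\cap B_0\neq\emptyset,\ \diam(V)<2^{-n-1}\}$ has closure-trace $\overline{U_n\cap B_0}\notin I$, and is therefore $J_I$-positive. Any such family admits a $J_I$-positive pairwise-disjoint subfamily of arbitrarily small diameter --- a standard argument using that the trace has infinitely many points --- which I take as $\suc_{T_0}(t)$; this same observation incidentally shows Luzin trees are dense in $Q(J_I)$. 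Proposition~\ref{positive} then gives $C:=\pi[T_0]\in P_{I^*}$, and by a suitable initial refinement ensuring $U_0\cap B_0$ is kept inside $B$, one arranges $C\subseteq B$.

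\emph{The main step (density).} The principal obstacle is to show that $\{\pi[T]:T\subseteq T_0\text{ Luzin in }Q(J_I)\}$ is dense in $P_{I^*}\restriction C$: given any $I^*$-positive Borel $D\subseteq C$, I must find Luzin $T\subseteq T_0$ with $\pi[T]\subseteq D$. I plan a second fusion using a countable elementary submodel $M$ containing $T_0$, $D$, and $I$. Properness of $P_I$ lets me extract $M$-generic branches of $T_0$, and a splitting-node induction produces $T\subseteq T_0$ whose branches are all $M$-generic and map into $D$. The delicate point is that at each splitting node one needs $J_I$-positively many successors through which $D$ can still be reached; the hypothesis that $P_I$ is nowhere equivalent to Cohen enters exactly here, because the alternative --- being unable to avoid an $I^*$-small obstruction without collapsing $J_I$-positivity --- would produce a $J_I$-positive set Kat\v etov-above $\nwd$, which by Theorem~\ref{nocohen} corresponds to Cohen-like behavior of $P_I$ below some condition, contradicting the hypothesis. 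Once the density is in hand, the order-preserving map $T\mapsto\pi[T]$ densely embeds the Luzin subtrees of $T_0$ (a dense subset of $Q(J_I)\restriction T_0$) onto a dense subset of $P_{I^*}\restriction C$, yielding the forcing equivalence.
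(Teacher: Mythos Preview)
Your overall architecture is right, but the two central steps contain genuine gaps.

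\textbf{The construction of $T_0$.} Simply taking at each node a $J_I$-positive pairwise-disjoint subfamily of $a_t$ is not enough. The paper builds $T_0$ with the additional structural property that for every splitnode $t$ the successors $\suc_{T_0}(t)$ are \emph{disjoint from} $\trace(\suc_{T_0}(t))$. This is precisely where the ``nowhere-Cohen'' hypothesis is spent: since $P_I$ is not Cohen below $\cl(B_0\cap U_t)$, there is a closed nowhere dense $I$-positive $N\subseteq\cl(B_0\cap U_t)$; one then picks a discrete set in $B_0\cap U_t$ accumulating onto $N$ and takes small neighborhoods with closures disjoint from $N$ (and from one another). Your construction does not secure this disjointness, and without it the density step collapses.

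\textbf{The density argument.} The point of the disjointness is the decomposition
\[
\cl(\pi[T_0])\ \subseteq\ \pi[T_0]\ \cup\ \bigcup_{t}\trace(\suc_{T_0}(t)).
\]
Given $I^*$-positive $D\subseteq\pi[T_0]$ (normalized via Solecki so that every nonempty relatively open piece is $I^*$-positive, hence has $I$-positive closure), one walks down $T_0$: at a splitnode $t$ with last coordinate $O$ meeting $D$, the set $\cl(D\cap O)\notin I$, while $\pi[T_0]\subseteq B_0\in I$; so the displayed inclusion forces $\cl(D\cap O)$ to meet some $\trace(\suc_{T_0}(u))$ $I$-positively, which yields $J_I$-positively many successors of $u$ whose last set meets $D$. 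No elementary submodel, no properness of $P_I$, and no further use of the Cohen hypothesis is needed here---only $B_0\in I$.

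Your proposed mechanism for density (an $M$-generic fusion plus the implication ``$J_I\restriction a\geq_K\nwd\Rightarrow P_I$ Cohen-like'') is misplaced on two counts. First, Theorem~\ref{nocohen} concerns $Q(J)$ adding Cohen reals, not $P_I$ being forcing-equivalent to Cohen; the bridge you seem to want is the corollary in Section~4, but that requires the stronger hypothesis that $P_I$ adds no Cohen reals. Second, and more importantly, in the paper the Cohen hypothesis is used \emph{only} in constructing $T_0$, not in the density step. Once you build $T_0$ with the successors-disjoint-from-trace property, density is a direct closure computation as above.
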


\begin{proof}
  Suppose that $B\subseteq X$ is a Borel set which belongs
  to $I$ but not to $I^*$. Assume also that $B$ forces that
  the generic point is not a Cohen real. By the Solecki
  theorem \cite[Theorem 1]{solecki:analytic}, we may assume
  that $B$ is a $\gdelta$ set and for every open set
  $O\subseteq X$, if $B\cap O\neq \emptyset$, then $B\cap
  O\notin I^*$.  Represent $B$ as a decreasing intersection
  $\bigcap_{n<\gw} O_n$ of open sets.

  We build a Luzin scheme $T$ of basic open sets $U_t$ for
  $t\in\btree$ satisfying the following demands:

  \begin{itemize}
  \item $U_t\subseteq O_{|t|}$ and $U_t\cap B\not=\emptyset$,
  \item the sets in $\suc_T(t)$ have pairwise disjoint
    closures and are disjoint from $\trace(\suc_T(t))$,
    which is an $I$-positive set.
  \end{itemize}

  To see how this is done, suppose that $U_t$ are built for
  $t\in\omega^{\leq n}$ and take any $t\in\omega^n$. The set
  $\cl(B\cap U_t)$ is $I$-positive, and since the
  $P_I$-generic real is not forced to be a Cohen real, there
  is a closed nowhere dense $I$-positive subset $C$ of
  $\cl(B\cap U_t)$. Find a discrete set $D=\{d_n:n<\omega\}$
  such that $D\subseteq B\cap U_t$ and $C\subseteq\cl(D)$.
  For each $n<\omega$ find a basic open neighborhood
  $V_n\subseteq U_t\cap O_{|t|+1}$ of $d_n$ such that the
  closures of the sets $V_n$ are pairwise disjoint, disjoint
  from $C$ and $C\subseteq\trace(\{V_n:n<\omega\})$. Put
  $U_{t^\smallfrown n}=V_n$.

  Let $T\in Q(J)$ be the Luzin scheme constructed above.
  Clearly, $T$ is Luzin, as well as each $S\in Q(J_I)$ such
  that $S\leq T$.  For each $S\leq T$ the set
  $\pi(S)\subseteq \pi(T)$ is Borel and $I^*$-positive by
  Proposition \ref{positive}.  We will complete the proof by
  showing that the range of $\pi$ is a dense subset of
  $P_{I^*}$ below the condition $\pi(T)$.

  For $C\subseteq B$ which is an $I^*$-positive set we must
  produce a tree $S\in Q(J), S\subseteq T$, such that
  $\pi[S]\subseteq C$. By the Solecki theorem we may assume
  that the set $C$ is $\gdelta$, a decreasing intersection
  $\bigcap_{n<\omega} W_n$ of open sets and for every open
  set $O\subseteq X$ if $O\cap C\neq \emptyset$, then $O\cap
  C\notin I$.  

  By tree induction build a tree $S\subseteq T$ such that
  for every sequence on $n$-th splitting level, the last set
  on the sequence is a subset of $W_n$, and still has
  nonempty intersection with the set $C$. In the end, the
  tree $S\subseteq T$ will be as required. 

  Now suppose that immediate successors of nodes on the
  $n$-th splitting level have been constructed.  Let $t$ be
  one of these successors. Find its extension $s\in T$ such
  that the last set $O$ on it is a subset of $W_{n+1}$ and
  still has nonempty intersection with $C$. Note that
  $$\cl(\pi[T])\subseteq\pi[T]\cup\bigcup_{u\in
    T}\trace(\suc_T(u)).$$ Since $\cl(C\cap O)\notin I$ and
  $\pi[T]\subseteq B\in I$, this means that there must be an
  extension $u$ of $s$ such that $\cl(C\cap
  O)\cap\trace(\suc_T(u))\notin I$. This can only happen if
  the set $b=\{V\in a_u:V\cap C\neq 0\}$ is $J$-positive,
  since $\cl(C\cap
  O)\cap\trace(\suc_T(u))\subseteq\trace(b)$. Put all nodes
  $\{u^\smallfrown V:V\in b\}$ into the tree $S$ and
  continue the construction.
\end{proof}

\section{The cases of Miller and Sacks}\label{sec:miller}

In this section we prove Proposition~\ref{millertheorem}.
This depends on a key property of the Miller and Sacks
forcings.

\begin{lemma}\label{millerlemma}
  Suppose $X$ is a Polish space, $B\subseteq X$ is a Borel
  set, $T$ is a Miller or a Sacks tree and $\dot x$ is a
  Miller or Sacks name for an element of the set $B$. Then
  there is $S\subseteq T$ and a closed set $C\subseteq X$
  such that $C\setminus B$ is countable, and $S\Vdash\dot
  x\in\check C$.
\end{lemma}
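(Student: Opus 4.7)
My plan is to handle both forcings uniformly at first and then diverge based on compactness. By continuous reading of names, I may strengthen $T$ to assume there is a continuous function $f\colon [T]\to X$ with $T\Vdash \dot x=f(\dot g)$, where $\dot g$ is the canonical generic branch. Then $T\Vdash f(\dot g)\in\check B$ means that the Borel set $\{y\in[T]:f(y)\notin B\}$ is not positive in the Sacks (respectively Miller) ideal, so by the standard Sacks dichotomy, respectively by the Hurewicz dichotomy underlying Miller forcing, I may further strengthen to a condition $S_0\subseteq T$ in the same forcing with $f''[S_0]\subseteq B$.

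In the Sacks case $[S_0]\subseteq\cantor$ is compact, so $C:=f''[S_0]$ is a closed subset of $B$, and the conclusion holds trivially with $C\setminus B=\emptyset$.

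In the Miller case $[S_0]$ is only closed in $\baire$ and typically fails to be compact, so $f''[S_0]$ need not be closed, and extra work is required to control the boundary. I would run a standard fusion to produce a Miller subtree $S\subseteq S_0$ with the following property: for every splitnode $t\in\spl(S)$ the diameters of the sets $f''[S\restriction t^\smallfrown n]$ tend to $0$ and these sets converge in Hausdorff distance to a single point $y_t\in X$ as $n\to\infty$ along $\suc_S(t)$. At the fusion stage handling a splitnode $t$, the construction selects points $z_n\in f''[S_0\restriction t^\smallfrown n]$, extracts a subsequence $z_{n_k}\to y_t$, and uses continuity of $f$ to replace each $S_0\restriction t^\smallfrown n_k$ by a Miller subtree whose $f$-image lies in the ball of radius $2^{-k}$ around $z_{n_k}$; since $\{n_k:k\in\gw\}$ remains infinite, $t$ is still infinitely splitting.

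I then claim $C:=\overline{f''[S]}=f''[S]\cup\{y_t:t\in\spl(S)\}$, which is closed, contains $f''[S]\subseteq B$, and has countable $C\setminus B$ because $\spl(S)$ is countable. For the nontrivial inclusion, suppose $f(x_k)\to y$ with $x_k\in[S]$ and $y\notin f''[S]$. The sequence $(x_k)$ cannot have a convergent subsequence in the closed subspace $[S]$ of $\baire$, since otherwise continuity would place $y$ into $f''[S]$; hence in $\baire$ some coordinate $x_k(m)$ is unbounded. Taking $m$ minimal and passing to a subsequence, $x_k\restriction m$ stabilizes at some $t\in\spl(S)$ with $x_k(m)\to\infty$ in $\suc_S(t)$, whence the fusion yields $f(x_k)\in f''[S\restriction t^\smallfrown x_k(m)]\to y_t$, forcing $y=y_t$. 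The main obstacle is the bookkeeping in the Miller fusion, since the successor sets at splitnodes change through the construction; this is of a standard flavor but requires care in the order in which splitnodes are scheduled for treatment, so that the commitments made at earlier stages are preserved.
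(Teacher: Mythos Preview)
Your approach is essentially the same as the paper's: thin the tree so that above each splitnode the $f$-images of the successor cones shrink and accumulate at a single point, and then check that the closure of $f''[S]$ adds only these countably many accumulation points. Two remarks are in order.

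First, there is a small but genuine gap. When you ``extract a subsequence $z_{n_k}\to y_t$'' you are tacitly assuming the sequence $(z_n)$ has a convergent subsequence. In a general Polish space $X$ this may fail (take $X=\baire$). The paper's phrasing covers this: it thins $\suc_T(t)$ so that the points $f(b_{t,n})$ form a discrete set with \emph{at most one} accumulation point $x_t$, allowing the possibility that there is none. In the latter case one shrinks the cones so that the sets $f''[S\restriction t^\smallfrown n_k]$ form a locally finite family; then no sequence $f(x_k)$ with $x_k(|t|)\to\infty$ can converge at all, so this splitnode contributes nothing to $\overline{f''[S]}\setminus f''[S]$. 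With this adjustment your closure argument goes through unchanged.

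Second, an organizational point that removes the bookkeeping difficulty you flag at the end. The paper does not run a fusion. It first makes, for every splitnode $t$ of the original tree $T$, all the local choices at once: the thinned successor set $a_t$, the accumulation point $x_t$ (if any), and levels $m_{t,n}$ together with pairwise disjoint neighborhoods $O_{t,n}\ni f(b_{t,n})$ such that $f''[T\restriction(b_{t,n}\restriction m_{t,n})]\subseteq O_{t,n}$. Only then does it build $S\subseteq T$ in a single recursive pass: at a splitnode $t$ keep exactly the successors $n\in a_t$ and follow the branch $b_{t,n}$ up to length $m_{t,n}$ before reaching the next splitnode. Since the local data were fixed for all splitnodes of $T$ in advance, the splitnodes of $S$ are among them and no commitments are ever revised. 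This is cleaner than your fusion and avoids the scheduling issue you mention.
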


\begin{proof}
  For the Sacks forcing it is obvious and we can even
  require that $C\subseteq B$. Let us focus on the Miller
  case. 

  Strengthening the tree $T$ if necessary, we may assume
  that there is a continuous function $f: [T]\to B$ such
  that $T\Vdash\dot x=f(\dotxgen)$. The problem of course is
  that the set $f''[T]$ may not be closed, and its closure
  may contain many points which do not belong to the set
  $B$.

  For every splitnode $t\in T$ and for every $n\in
  \suc_T(t)$ pick a branch $b_{t,n}\in [T]$ such that
  $t^\smallfrown n\subseteq b_{t,n}$. Next, find an infinite
  set $a_t\subseteq \suc_T(t)$ such that the points
  $\{f(b_{t,n}):n\in a_t\}$ form a discrete set with at most
  one accumulation point $x_t$. For $n\in a_t$ find numbers
  $m_{t,n}\in\gw$ and pairwise disjoint open sets $O_{t,n}$
  such that $f''[T\restriction(b_{t,n}\restriction m_{t,n})]\subseteq
  O_{t,n}$.  Find a subtree $S\subseteq T$ such that for
  every splitnode $t\in S$, if $t^\smallfrown n\in S$, then
  $n\in a_t$ and the next splitnode of $S$ past
  $t^\smallfrown n$ extends the sequence
  $b_{t,n}\restriction m_{t,n}$.

  It is not difficult to see that $\cl(f''[S])\subseteq
  f''[S]\cup\{x_t:t\in\gwtree\}$, and therefore the tree $S$
  and the closed set $C=\cl(f''[S])$ are as needed.
\end{proof}

Of course, in the previous lemma, $B$ may be in any
sufficiently absolute pointclass, like
$\mathbf{\Sigma}^1_2$. Proposition~\ref{millertheorem} now
immediately follows.

\begin{proof}[Proof of Proposition~\ref{millertheorem}]
  If the $\gs$-ideal $I$ does not contain the same Borel
  sets as $I^*$, then any condition $B\in I\setminus I^*$
  forces in $P_{I^*}$ the generic point into $B$ but outside
  of every closed set in the $\sigma$-ideal $I$.  However,
  by Lemma \ref{millerlemma} we have that if the Miller or
  the Sacks forcing forces a point into a Borel set in a
  $\gs$-ideal, then it forces that point into a closed set
  in that $\gs$-ideal. Thus, $P_{I^*}$ cannot be in the
  forcing sense equivalent neither to Miller nor to Sacks
  forcing in the case that $I\not=I^*$.
\end{proof}

\section{Borel degrees}\label{sec:bordeg}

In this section we prove Theorem \ref{degrees}. To this
end, we need to learn how to turn Borel functions into
functions which are continuous and open.

\begin{lemma}\label{generics}
  Suppose $I$ is a $\sigma$-ideal on a Polish space $X$ such
  that $P_I$ is proper. Let $B$ be Borel $I$-positive, and
  $f:B\rightarrow\baire$ be Borel. For any countable
  elementary submodel $M\prec H_\kappa$ the set $$f''\{x\in
  B: x\mbox{ is }P_I\mbox{-generic over }M\}$$ is Borel.
\end{lemma}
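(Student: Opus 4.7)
The plan is to identify $f''G$, where $G=\{x\in B: x \text{ is } P_I\text{-generic over } M\}$, with a Borel set defined via the pushforward $\sigma$-ideal under $f$. First, note that $G$ itself is Borel: since $M$ is countable, only countably many Borel codes for elements of $I$ appear in $M$, so
$$G = B\setminus\bigcup\{A:A\in I\cap M,\ A\text{ Borel}\}$$
is a Borel set.

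Assume without loss of generality that $f,B,I\in M$, and define the pushforward $\sigma$-ideal $K$ on $\baire$ by declaring a Borel $A\subseteq\baire$ to belong to $K$ iff $f^{-1}(A)\cap B\in I$. Then $K$ is a $\sigma$-ideal with $K\in M$, and the set $N=\baire\setminus\bigcup\{A:A\in K\cap M,\ A\text{ Borel}\}$ is Borel. The forward inclusion $f''G\subseteq N$ is immediate: if $y=f(x)$ with $x\in G$ and $A\in K\cap M$ is Borel, then $f^{-1}(A)\cap B\in I\cap M$, so $x$ avoids it and hence $y\notin A$.

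For the reverse inclusion $N\subseteq f''G$, given $y\in N$ I construct an $M$-generic filter $H$ for $P_I$ containing $B$ whose associated generic real $x$ satisfies $f(x)=y$. Enumerate the dense open subsets of $P_I$ lying in $M$ as $\{D_n:n<\omega\}$, and recursively pick $C_n\in D_n\cap M$ with $B\supseteq C_0\supseteq C_1\supseteq\ldots$, maintaining the invariant that $y$ lies in a Borel hull modulo $K$ of the analytic set $f''C_n$. The invariant can always be preserved: for any maximal antichain $E\in M$ below $C_{n-1}$, the images $\{f''C:C\in E\}$ cover $f''C_{n-1}$ modulo a set in $K\cap M$, so $y\in N$ forces some $C\in E$ to admit $y$ in the requisite hull. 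Properness of $P_I$ ensures that $H$ is genuinely $M$-generic, and after refining the topology of $B$ so that $f$ becomes continuous, the constraints $y\in f''C_n$ for all $n$ force $f(x)=y$ in the limit.

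The main obstacle is that the images $f''C$ are analytic rather than Borel, so the fusion must work with Borel hulls modulo $K$ rather than with the images themselves. This can be handled by the standard fact that under modest definability assumptions $\sigma$-ideals admit Borel hulls on $\Sigma^1_1$ sets; alternatively, one can invoke Shoenfield absoluteness applied to the $\Sigma^1_2$ statement ``$\exists x\in G\ f(x)=y$'': in an auxiliary $P_K$-generic extension where $y$ becomes $V$-generic, properness lifts $y$ to a $P_I$-generic real $x$ with $f(x)=y$, and absoluteness recovers such an $x$ in $V$.
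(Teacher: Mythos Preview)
Your target identity $f''G=N$ is correct, and in fact your pushforward ideal $K$ satisfies $\mathrm{Borel}(\baire)/K\cong R$ for the relevant complete subalgebra $R$ below, so your $N$ is exactly the set of $R$-generics over $M$. But the argument you give for $N\subseteq f''G$ has real gaps. The phrase ``Borel hull modulo $K$'' of an analytic set is never defined, and producing such hulls inside $M$ would need a definability hypothesis on $I$ that the lemma does not assume. More seriously, your covering claim does not do what you need: you must choose $C_n$ from the \emph{countable} set $E\cap M$, not from $E$ itself, and nothing you have said guarantees that $\bigcup_{C\in E\cap M}f''C$ covers $f''C_{n-1}$ modulo a set in $K\cap M$. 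Even if the invariant were maintained, you would only know that $y$ lies in a hull of each $f''C_n$, not in $f''C_n$ itself, so the limiting step does not deliver $f(x)=y$. The Shoenfield alternative is also garbled: $y\in V$ cannot ``become $V$-generic'' in any extension, and you do not specify in which model the lift is supposed to take place or how properness produces it.

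The paper's proof avoids all of this by working directly with the complete subalgebra $R\subseteq\ro(P_I)$ generated by the name $\dot y=f(\dot g)$. One checks that $y\in f''G$ iff $y$ is $R$-generic over $M$: the forward direction is restriction of a $P_I$-generic filter to a complete subalgebra, and the backward direction is the elementary fact that any $R$-generic filter over the countable model $M$ extends to a $\ro(P_I)$-generic filter over $M$. The set of $R$-generics over $M$ is then Borel simply because it is the injective Borel image of the $\gdelta$ set of $M$-generic ultrafilters on the countable algebra $R\cap M$. No hulls, no absoluteness, and no hand-built fusion are required.
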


\begin{proof}
  Without loss of generality assume that $B=X$. Let $\dot y$
  be a $P_I$-name for $f(\dot g)$, where $\dot g$ is the
  canonical name for the generic real for $P_I$. Take
  $R\subseteq \ro(P_I)$ the complete subalgebra generated by
  $\dot y$. Notice that for each $y\in\baire$ we have $$y\in
  f''C\quad\mbox{ iff }\quad y\mbox{ is }R\mbox{-generic
    over }M.$$ Hence, it is enough to prove that
  $C'=\{y\in\baire: y\mbox{ is }R\mbox{-generic over }M\}$
  is Borel. $C'$ is a 1-1 Borel image of the set of
  ultrafilters on $R\cap M$ which are generic over $M$.  The
  latter set is $\gdelta$, so $C'$ is Borel.
\end{proof}

Now we show that Borel functions can turned into continuous
and open functions after restriction their domain and some
extension of topology. If $Y$ is a Polish space and $I$ is a
$\sigma$-ideal on $Y$, then we say that $Y$ is
\textit{$I$-perfect} if $I$ does not contain any nonempty
open subset of $Y$.

\begin{proposition}\label{top}
  Suppose $I$ is a $\sigma$-ideal on a Polish space $X$ such
  that $P_I$ is proper. Let $B\subseteq X$ be $I$-positive,
  and $f:B\rightarrow \baire$ be Borel. There are Borel sets
  $Y\subseteq B$ and $Z\subseteq\baire$ such that $Y$ is
  $I$-positive, $f''Y=Z$ and
  \begin{itemize}
  \item $Y$ and $Z$ carry Polish zero-dimensional topologies
    which extend the original ones, preserve the Borel
    structures and the topology on $Y$ is $I$-perfect.
  \item the function $f\har Y: Y\rightarrow Z$ is continuous
    and open in the extended topologies.
  \end{itemize} 
\end{proposition}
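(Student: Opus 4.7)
The strategy is to reduce to the case where $f$ is already continuous, then let $Y$ consist of $P_I$-generics over a countable elementary submodel $M$, and obtain $Z = f''Y$ as Borel via Lemma~\ref{generics}. First, by a standard Kuratowski refinement, enlarge the topology on $B$ to a zero-dimensional Polish topology $\sigma$ extending the original, with countable clopen basis $\{A_n : n<\gw\}$ closed under finite Boolean operations, in which $f:(B,\sigma)\to\baire$ is continuous. Pick $M\prec H_\kappa$ countable containing $I$, $B$, $f$, $\sigma$, $\{A_n\}$, and the $P_I$-name $\dot y$ for $f(\dot g)$; set $Y=\{x\in B : x\text{ is }P_I\text{-generic over }M\}$, a Borel $I$-positive subset of $B$ (complement in $B$ of the countable $I$-small union $\bigcup\{C\in M:C$ Borel, $C\in I\}$), and $Z=f''Y$, Borel by Lemma~\ref{generics}.

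For each $n$, Lemma~\ref{generics} applied to $A_n\cap B$ shows that $D_n:=f''(A_n\cap Y)$ is Borel; inspecting its proof, the code of $D_n$ is produced from the complete subalgebra of $\ro(P_I\har A_n)$ generated by $\dot y$, data entirely within $M$, so the code of $D_n$ lies in $M$. Define $\tau_Y$ as the $\sigma$-subspace topology on $Y$ further refined by declaring each $f^{-1}(D_n)\cap Y$ clopen, and $\tau_Z$ as the $\baire$-subspace topology on $Z$ further refined by declaring each $D_n$ clopen. Both are zero-dimensional Polish refinements extending the original subspace topologies with the same Borel structure, since adding countably many Borel sets as clopen is a standard Polish refinement. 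A basic clopen of $(Y,\tau_Y)$ has the form $A_k\cap f^{-1}(D)\cap Y$ with $D$ a Boolean combination of the $D_n$'s, using Boolean closure of $\{A_n\}$.

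Openness of $f\har Y$: the image of such a basic open equals $D_k\cap D$, using $f(A_k\cap Y)=D_k$ and the identity $f(f^{-1}(D)\cap Y)=D\cap Z=D$, which is a basic $\tau_Z$-open. Continuity: the preimage of a basic $\tau_Z$-open $V\cap D\cap Z$ (with $V$ clopen in $\baire$) equals $f^{-1}(V)\cap f^{-1}(D)\cap Y$, where $f^{-1}(V)$ is $\sigma$-open by continuity of $f$ on $(B,\sigma)$ and $f^{-1}(D)\cap Y$ is $\tau_Y$-open by construction. For $I$-perfectness, a nonempty basic open $U=A_k\cap f^{-1}(D)\cap Y$ equals $U'\cap Y$ for the Borel set $U'=A_k\cap f^{-1}(D)$, whose code lies in $M$; if $U'\in I$ then by elementarity no $M$-generic would meet $U'$, contradicting $U\neq\emptyset$, so $U'\notin I$ and hence $U\notin I$, since $B\setminus Y\in I$.

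The main obstacle is that a naive back-and-forth refinement of the two topologies tends to generate uncountably many new Borel sets and never stabilize. This is averted by the identity $f(f^{-1}(D)\cap Y)=D$ for $D\subseteq Z$, combined with Boolean closure of $\{A_n\}$, which closes the refinement after a single round and keeps both bases countable; the secondary technical point of ensuring that the Borel codes of the $D_n$'s lie in $M$ is resolved by a direct inspection of the proof of Lemma~\ref{generics}.
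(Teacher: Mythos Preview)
Your overall setup---taking $Y$ as the set of $P_I$-generics over a countable $M$ and $Z=f''Y$---matches the paper, and openness and continuity of $f\restriction Y$ work as you say once the topologies are fixed. The gap is in the $I$-perfectness argument, specifically the claim that the Borel code of $D_n=f''(A_n\cap Y)$ lies in $M$. Inspecting the proof of Lemma~\ref{generics}, $D_n$ is the set of $y\in\baire$ that are $R$-generic over $M$ (for $R$ the complete subalgebra generated by $\dot y$); while $R$ and $R\cap M$ are determined by data in $M$, the Borel code for the set of $M$-generic ultrafilters on $R\cap M$ is built from the family of \emph{all} dense subsets of $R$ lying in $M$, a family defined by quantifying over $M$ and hence not an element of $M$. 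So there is no reason for the code of $D_n$, or of $U'=A_k\cap f^{-1}(D)$, to lie in $M$, and your elementarity argument for $U'\notin I$ breaks down.

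What is actually needed is that $f^{-1}(D_n)\cap Y=E_n\cap Y$ for some Borel $E_n$ coded in $M$. This is true, but it is precisely the content of the paper's Lemma~\ref{collapse}: one passes to a $\Coll(\gw,\lambda)$-extension of $M[x]$, where the statement ``$\exists x'$ $P_I$-generic over $M$ with $x'\in A_n$ and $f(x')=f(x)$'' becomes analytic in a real parameter available there, and then uses absoluteness together with the forcing theorem to obtain a Borel description coded in $M$. The paper files this work under continuity (choosing $\tau_Y$ to be generated by the Borel sets coded in $M$, so that $I$-perfectness is then immediate); by declaring the sets $f^{-1}(D_n)$ clopen you have merely shifted the same difficulty to the $I$-perfectness clause, and it cannot be discharged by an appeal to Lemma~\ref{generics}.
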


\begin{proof}
  Fix $\kappa$ big enough and let $M\prec H_\kappa$ be a
  countable elementary submodel coding $B$ and $f$. Let
  $Y=\{x\in B: x\mbox{ is }P_I\mbox{-generic over }M\}$. By
  Lemma \ref{generics} we have that $Z=f''Y$ is Borel.

  Note that if a $\mathbf{\Sigma}^0_\alpha$ set $A$ is coded
  in $M$, then there are sets $A_n$ coded in $M$,
  $A_n\in\mathbf{\Pi}^0_{<\alpha}$ such that $A=\bigcup_n
  A_n$. Therefore, we can perform the construction from
  \cite[Theorem 13.1]{kechris:classical} and construct a
  Polish zero-dimensional topology on $X$ which contains all
  Borel sets coded in $M$. Note that the Borel sets coded in
  $M$ form a basis for this topology. Moreover, $Y$ is
  homeomorphic to the set of ultrafilters in $\st(P_I\cap
  M)$ which are generic over $M$. So $Y$ is a $\gdelta$ set
  in the extended topology. Let $\tau$ be the restriction of
  this extended topology to $Y$. The fact that $\tau$ is
  $I$-perfect on $Y$ follows directly from properness of
  $P_I$.

  Let $\sigma$ be the topology on $Z$ generated by the sets
  $f''(Y\cap A)$ and their complements, for all $A\subseteq
  B$ which are Borel and coded in $M$.

  Now we prove that $f\restriction Y$ is a continuous open
  from $(Y,\tau)$ to $(Z,\sigma)$. The fact that $f$ is open
  follows right from the definitions. Now we prove that $f$
  is continuous. Fix a cardinal $\lambda$ greater than
  $2^{2^{|P_I|}}$ and a Borel set $A$ coded in $M$.

  \begin{lemma}\label{collapse}
    Given $x\in Y$ we have
    \begin{itemize}
    \item $f(x)\in f''(A\cap Y)$ if and only if
      $$M[x]\models\Coll(\omega,\lambda)\Vdash\exists x'\,
      P_I\mbox{-generic over }M\ [x'\in A\ \wedge\
      f(x)=f(x')],$$
    \item $f(x)\not\in f''(A\cap Y)$ if and only if
      $$M[x]\models\Coll(\omega,\lambda)\Vdash\forall x'\,
      P_I\mbox{-generic over }M\ [x'\in A\ \Rightarrow\
      f(x)\not=f(x')].$$
    \end{itemize}
  \end{lemma}
  \begin{proof}
    We prove only the first part. Note that in $M$ there is
    a surjection from $\lambda$ onto the family of all dense
    sets in $P_I$ as well as sujections from $\lambda$ onto
    each dense set in $P_I$.  Therefore, if $x\in Y$ and
    $g\subseteq\Coll(\omega,\lambda)$ is generic over
    $M[x]$, then in $M[x][g]$ the formula
    $$\exists x'\, P_I\mbox{-generic over }M\ [x'\in A\
    \wedge\ f(x)=f(x')]$$ is analytic with parameters $A$,
    $f$ and a real which encodes the family $\{D\cap M: D\in
    M\mbox{ is dense in }P_I\}$ and therefore it is absolute
    between $M[x][g]$ and $V$. Hence $$M[x][g]\models\exists
    x'\, P_I\mbox{-generic over }M\ [x'\in A\ \wedge\
    f(x)=f(x')]$$ if and only if $f(x)\in f^{-1}(f''(A\cap
    Y))$.
  \end{proof}
  Now it follows from from Lemma \ref{collapse} and the
  forcing theorem that both sets $Y\cap f^{-1}(f''(A\cap
  Y))$ and $Y\cap f^{-1}(Z\setminus f''(A\cap Y))$ are in
  $\tau$. This proves that $f$ is continuous.

  We need to prove that $Z$ with the topology $\sigma$ is
  Polish. Note that it is a second-countable Hausdorff
  zero-dimensional space, so in particular metrizable. As a
  continuous open image of a Polish space, $Z$ is Polish by
  the Sierpi\'nski theorem \cite[Theorem
  8.19]{kechris:classical}.  

  The fact that $\sigma$ has the same Borel structure as the
  original one follows directly from Lemma \ref{generics}.

\end{proof}

Now we are ready to prove Theorem \ref{degrees}.

\begin{proof}[Proof of Theorem \ref{degrees}]
  Let $C\in P_I$ and $\dot x$ be a name for a real such
  that
  $$C\Vdash\dot x\mbox{ is not a Cohen real and }\dot
  x\not\in V.$$ Without loss of generality assume that $C=X$
  and $C\Vdash \dot x=f(\dot g)$ for some continuous
  function $f:X\rightarrow\baire$.  We shall find $B\in P_I$
  and a Borel automorphism $h$ of $\baire$ such that
  $$B\Vdash h(f(\dot g))=\dot g.$$

  Find Polish spaces $Y\subseteq X$ and $Z\subseteq\baire$
  as in Proposition \ref{top}. Without loss of generality
  assume that $Y=X$ and the extended topologies are the
  original ones (note that $I$ is still generated by closed
  sets in any extended topology).

  Now we construct $T\in Q(J_I)$ and a Borel automorphism
  $h$ of $\baire$. To this end we build two Luzin schemes
  $U_t\subseteq X$ and $C_t\subseteq\baire$ (for
  $t\in\btree$), both with the vanishing diameter property
  and such that
  \begin{itemize}
  \item $U_t$ is basic open and $C_t$ is closed,
  \item $f''U_t\subseteq C_t$
  \item for each $t\in\btree$ the set $\{U_{t^\smallfrown
      k}:k<\omega\}$ is $J_I$-positive.
  \end{itemize}

  We put $U_\emptyset=X$ and $C_\emptyset=\baire$. Suppose
  $U_t$ and $C_t$ are built for all $t\in\omega^{<n}$. Pick
  $t\in\omega^{n-1}$. Now $f''U_t$ is an open set. Let $K$
  be the perfect kernel of $f''U_t$. $K$ is nonempty since
  $\dot x$ is forced not to be in $V$. Hence $K$ is a
  perfect Polish space and $U_t\Vdash\dot x\in K$. Note that
  there is a closed nowhere dense $N\subseteq K$ such that
  $f^{-1}(N)$ is $I$-positive, since otherwise
  $$U_t\Vdash \dot x\mbox{ is a Cohen real in
  }K.$$ Pick such an $N$ and let $M=f^{-1}(N)$. $N$ is
  closed nowhere dense in $f''U_t$ too, so $M$ is closed
  nowhere dense in $U_t$ because $f$ is continuous and open.

  Enumerate all basic open sets in $U_t$ having nonempty
  intersection with $M$ into a sequence $\langle
  V_k:k<\omega\rangle$. Inductively pick clopen sets
  $W_k\subseteq U_t$ and $C_k\subseteq\baire$ such that
  \begin{itemize}
  \item $W_k\subseteq f^{-1}(C_k)\cap V_k$ is basic open,
  \item $C_k$ are pairwise disjoint,
  \item $f^{-1}(C_k)$ are disjoint from $M$.
  \end{itemize}
  Do this as follows. Suppose that $W_i$ and $C_i$ are
  chosen for $i<k$. Since $f^{-1}(C_i)$ are disjoint from
  $M$ and $V_k\cap M\not=\emptyset$, the set $V_k\setminus
  \bigcup_{i<k}f^{-1}(C_i)$ is a nonempty clopen set.  Pick
  $x_k\in V_k\setminus\bigcup_{i<k}f^{-1}(C_i)\setminus M$.
  Since $f(x_k)\not\in N\cup\bigcup_{i<k}C_i$, there is a
  clopen neighborhood $C_k$ of $f(x_k)$ which is disjoint
  from $N\cup\bigcup_{i<k}C_i$.  Let $W_k$ be a basic
  neighborhood of $x_k$ contained in $f^{-1}(C_k)\cap V_k$.
  Put $U_{t^\smallfrown k}=W_k$ and $C_{t^\smallfrown
    k}=C_k$. Since $M\subseteq\trace(\{ W_k:k<\omega\})$, we
  have that $\{U_{t^\smallfrown k}:k<\omega\}$ is
  $J_I$-positive.

  This ends the construction of $T\in Q(J_I)$. It is routine
  now to define a Borel automorphism $h$ of $\baire$ out of
  the sets $U_\tau$ and $C_\tau$ so that $T\Vdash \dot
  g=h(f(\dot g))$. This ends the proof.

\end{proof}

The above proof essentially uses the technique of topology
extension and works for the Borel degrees but not for
continuous degrees. For the Sacks and Miller forcing,
however, we know that there is only one continuous degree
added in the generic extension. Therefore the following
question naturally appears.

\begin{question}
  Let $I$ be a $\sigma$-ideal generated by closed sets such
  that $P_I$ does not add Cohen reals. Does $P_I$ add one
  continuous degree?
\end{question}

\bibliographystyle{plain}
\bibliography{odkazy,zapletal}

\begin{thebibliography}{10}

\bibitem{bartoszynski:set}
Tomek Bartoszy\'nski and Haim Judah.
\newblock {\em Set Theory. On the structure of the real line}.
\newblock A K Peters, Wellesley, MA, 1995.

\bibitem{bartoszynski:closed}
Tomek Bartoszy\'nski and Saharon Shelah.
\newblock Closed measure zero sets.
\newblock {\em Annals of Pure and Applied Logic}, 58:93--110, 1992.

\bibitem{hrusak:survey}
Michael Hru{\v s}{\' a}k.
\newblock Combinatorics of filters and ideals.
\newblock 2009.
\newblock preprint.

\bibitem{jech:set}
Thomas Jech.
\newblock {\em Set Theory}.
\newblock Academic Press, San Diego, 1978.

\bibitem{katetov:filters}
Miroslav Kat{\v e}tov.
\newblock Products of filters.
\newblock {\em Commentationes Mathematicae Universitatis Carolinae},
  9:173--189, 1968.

\bibitem{klw:compact}
Alexander Kechris, Alain Louveau, and Hugh Woodin.
\newblock The structure of $\sigma$-ideals of compact sets.
\newblock {\em Transactions of American Mathematical Society}, 301:263--288,
  1987.

\bibitem{kechris:classical}
Alexander~S. Kechris.
\newblock {\em Classical Descriptive Set Theory}.
\newblock Springer Verlag, New York, 1994.

\bibitem{meza:phd}
David Meza-Alc\'antara.
\newblock {\em Ideals and Filters on Countable Sets}.
\newblock Ph.D. thesis, Universidad Nacional Aut\'onoma M\'exico, M\'exico,
  2009.

\bibitem{solecki:analytic}
S{\l}awomir Solecki.
\newblock Covering analytic sets by families of closed sets.
\newblock {\em Journal of Symbolic Logic}, 59:1022--1031, 1994.

\bibitem{solecki:filters}
S{\l}awomir Solecki.
\newblock Filters and sequences.
\newblock {\em Fundamenta Mathematicae}, 163:215--228, 2000.

\bibitem{z:book2}
Jind{\v r}ich Zapletal.
\newblock {\em Forcing Idealized}.
\newblock Cambridge {T}racts in {M}athematics 174. Cambridge University Press,
  Cambridge, 2008.

\bibitem{z:ppoints}
Jind{\v{r}}ich Zapletal.
\newblock Preserving {$P$}-points in definable forcing.
\newblock {\em Fundamenta Mathematicae}, 204(2):145--154, 2009.

\end{thebibliography}
\end{document}